\newtheorem{theorem}{Theorem}[section]
\newtheorem{definition}{Definition}[section]
\newtheorem{proposition}[theorem]{Proposition}
\newtheorem{lemma}[theorem]{Lemma}
\newtheorem{corollary}[theorem]{Corollary}
\newtheorem{claim}[theorem]{Claim}
\newtheorem{remark}[theorem]{Remark}
\newtheorem{theoremalph}{Theorem}
\def\DD{{\mathbb D}}
 \def\NN{{\mathbb N}} 
 \def\RR{{\mathbb R}}
 \def\ZZ{{\mathbb Z}}
\def \F{\EuScript{F}}
\def \B{\mathcal{B}}
\def \K{\mathcal{K}}
\def \T{\mathcal{T}}
\def\F {{\mathcal F}}
\def\B {{\mathcal B}}
\def\P{\mathcal P}
\def\U{{\mathcal U}}
\begin{document}

\title[Stochastic stability]{Stochastic stability for partially hyperbolic diffeomorphisms with mostly expanding and mostly contracting centers}

%
%

\author[Zeya Mi]{Zeya Mi}

\address{School of Mathematics and Statistics,
Nanjing University of Information Science and Technology, Nanjing 210044, China}
\email{\href{mailto:mizeya@nuist.edu.cn}{mizeya@nuist.edu.cn}, \href{mailto:mizeya@163.com}{mizeya@163.com}}


\date{\today}

\keywords{Partial hyperbolicity, Physical measure, SRB measure, Lyapunov exponent, Stochastic stability}
\subjclass[2000]{37C40, 37D25, 37D30}

\begin{abstract}
We prove the stochastic stability of an open class of partially hyperbolic diffeomorphisms, each of which admits two centers $E^c_1$ and $E^c_2$ such that any Gibbs $u$-state admits only positive (resp. negative) Lyapunov exponents along $E^{c}_1$ (resp. $E^c_2$).
\end{abstract}

\maketitle


\section{Introduction}
\emph{Physical measures} are invariant measures that can reflect chaotic dynamical behaviors with different aspects, they were achieved by Sinai, Ruelle and Bowen \cite{Sin72, Rue76, Bow75, BoR75} for uniformly hyperbolic systems. Given a diffeomorphism $f$ on a compact Riemannian manifold $M$, one says that an $f$-invariant measure $\mu$ is physical if 
\begin{equation*}
\frac{1}{n}\sum_{i=0}^{n-1}\delta_{f^ix}\xrightarrow{weak^*} \mu,\quad n\to +\infty
\end{equation*}
holds on a set of positive Lebesgue measure.

In recent decades, a central topic in smooth dynamical systems is the study on existence and finiteness of physical measures and their stability under perturbations for systems beyond uniform hyperbolicity.
It was conjectured by Palis \cite{pa} that every dynamical system can be approximated by another one having only finitely many physical measures, whose basins cover Lebesgue almost every point of the ambient manifold. Moreover, these physical measures should be stable under perturbations of the system, in stochastic sense.

Denote by ${\rm Diff}^{2}(M)$ the set of $C^2$ diffeomorphisms on $M$.
Given $f\in {\rm Diff}^{2}(M)$, we will study the random dynamical systems generated by i.i.d. random maps around $f$ (see \cite{v97}, \cite{A03},\cite[Appendix D.2]{BDV05}). For that, we consider a metric (parameter) space $\T$ and define the continuous map
\begin{align*}
T:~~\T &  ~~\longrightarrow ~~{\rm Diff}^{2}(M)\\ ~~\omega &~~ \longmapsto ~~f_{\omega}
\end{align*}
with $T(\omega_f)=f$ for some fixed $\omega_f\in \T$.
Let $\{\theta_{\varepsilon}\}_{\varepsilon>0}$ be a family of Borel probabilities of $\T$ such that
\begin{enumerate}[(R1)]
\item\label{R1}
${\rm supp}(\theta_{\varepsilon})$ is a sequence of nested compact subsets satisfying ${\rm supp}(\theta_{\varepsilon})\to\{\omega_{f}\}$ as $\varepsilon\to 0$. 
\item\label{R2} For any $x\in M$, we have $P(x,\cdot)\ll {\rm Leb}$, where $P(x,\cdot)$ is the transition probability defined by $P(x,A)=\theta(\{\omega, f_{\omega}(x)\in A\})$ for any measurable subset $A\subset M$.
\end{enumerate}

We refer to $\{T,\{\theta_{\varepsilon}\}_{\varepsilon>0}\}$ as a \emph{regular random perturbation of} $f$.
Recall that a probability measure $\mu$ is a \emph{stationary} measure with respect to $\theta_{\varepsilon}$ if  
$$
\int \mu\circ f_{\omega}^{-1}d\theta_{\varepsilon}(\omega)=\mu.
$$
We say that $\mu$ is a \emph{zero-noise limit} measure if it is a weak* accumulation point of a sequence of stationary measures $\mu_{\varepsilon}$ with respect to $\theta_{\varepsilon}$ when $\varepsilon\to 0$, it is well known that any zero-noise limit measure is $f$-invariant. 

\begin{definition}
The diffeomorphism $f$ is said to be \emph{stochastically stable} with respect to $\{T,\{\theta_{\varepsilon}\}_{\varepsilon>0}\}$ if any zero noise limit measure is contained in the convex hull of the physical measures of $f$.
\end{definition}

Uniformly hyperbolic systems are known to be stochastically stable \cite{Yuk86,Yuk88,v97,y86}. The knowledge of stochastic stability for systems beyond uniformly hyperbolicity is still very incomplete. However, there are some important results on this subject.
Stochastic stability was obtained for some special kind of non-uniformly hyperbolic systems, like quadratic maps, H\'{e}non maps and Viana maps \cite{KK,by,Blu,BV06}. In the sequence of works \cite{AA03, AAV07,AV13}, stochastic stability was studied for diffeomorphisms with non-uniformly expanding behaviors.
In this paper, we attempt to prove stochastic stability for a a class of partially hyperbolic systems with weak expansion and contraction on centers. 

We say that a diffeomorphism $f$ admits a \emph{dominated splitting} $TM=E_1\oplus_{\succ}\cdots \oplus_{\succ} E_{k}$ if any $E_i$, $1\le i \le k$ is $Df$-invariant and for any $1\le i<j\le k$, we have
$$
\|Df|_{E_i(x)}\|\cdot \|Df^{-1}|_{E_j(f(x))}\|<\frac{1}{2} \quad \textrm{for every}~ x\in M.
$$ 
We use $E^{s}$ (resp. $E^{u}$) to denote that it is the stable sub-bundle which contracts(resp. expands) uniformly under action of $Df$.
A diffeomorphism $f$ is \emph{partially hyperbolic} if there exists a dominated splitting and at least one of the sub-bundles is $E^u$ or $E^s$.

For partially hyperbolic diffeomorphisms admitting $E^{u}$, it was proved in \cite{BDV05} that any physical measure is a Gibbs $u$-state---a measure whose conditional measures along strong unstable manifolds are absolutely continuous with respect to Lebesgue measures. It turns out that for partially hyperbolic systems, Gibbs $u$-states are crucial candidates of physical measures (see e.g. \cite{ps82, bv,bdpp,CY05}).

Denote by $\U(M)$ the set of $C^{2}$ diffeomorphims such that any $f\in \U(M)$ is a partially hyperbolic diffeomorphism with splitting $TM=E^{u}\oplus_{\succ} E^{c}_{1}\oplus_{\succ} E^{c}_2$ such that 
\begin{itemize}
\smallskip
\item
$E^{c}_1$ is mostly expanding: any Gibbs $u$-state has only positive Lyapunov exponents along $E^{cu}$;
\item
\smallskip
$E^{c}_2$ is mostly contracting: any Gibbs $u$-state has only negative Lyapunov exponents along $E^{cs}$.
\end{itemize}

The main result of this paper is as follows:

\begin{theoremalph}\label{TheoA}
Every $f\in \U(M)$ is stochastically stable.
\end{theoremalph}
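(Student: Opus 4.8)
The plan is to carry out the classical zero-noise-limit argument inside the class of Gibbs $u$-states, exploiting that the two properties defining $\U(M)$ --- the existence of a uniformly unstable bundle $E^u$ and the control of the center exponents of every Gibbs $u$-state --- persist under small $C^2$ perturbations. I would first record the structural facts about $f\in\U(M)$ needed downstream: the set $\mathcal G(f)$ of Gibbs $u$-states of $f$ is non-empty, convex and weak$^*$-compact, almost every ergodic component of a measure in $\mathcal G(f)$ again lies in $\mathcal G(f)$, and hence the extreme points of $\mathcal G(f)$ are exactly the ergodic Gibbs $u$-states. Since every Gibbs $u$-state has only positive Lyapunov exponents along $E^{cu}=E^u\oplus E^c_1$ and only negative ones along $E^c_2$, each ergodic Gibbs $u$-state is a hyperbolic measure whose expanding and contracting Oseledets subspaces span $E^{cu}$ and $E^c_2$; by the theory of Gibbs $cu$-states for a mostly expanding center (absolute continuity along $W^u$ propagating to the center-unstable Pesin manifolds), together with the contraction along $E^c_2$ and Pesin's theorem, every ergodic Gibbs $u$-state of $f$ is a physical measure, and there are only finitely many of them. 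Consequently every measure in $\mathcal G(f)$ is a finite convex combination of physical measures of $f$, and Theorem~\ref{TheoA} reduces to the single assertion that \emph{every zero-noise limit measure of a regular random perturbation of $f$ is a Gibbs $u$-state of $f$}.

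To prove this, note that $\U(M)$ is $C^2$-open and that partial hyperbolicity persists uniformly on a $C^2$-neighborhood $\V$ of $f$: there are a single unstable cone field, uniform domination constants, uniform expansion of vectors in the cone, and a uniform bounded-distortion constant for the unstable Jacobian along disks tangent to the cone, all valid for every $g\in\V$. Hence there is $\varepsilon_0>0$ so that for $\varepsilon<\varepsilon_0$ every $f_\omega$ with $\omega\in{\rm supp}(\theta_\varepsilon)$ lies in $\V$, with these estimates uniform in $\omega$ and $\varepsilon$. Let $\mu_\varepsilon$ be a stationary measure. Writing $\mu_\varepsilon(A)=\int P(x,A)\,d\mu_\varepsilon(x)$ and using $P(x,\cdot)\ll{\rm Leb}$ from condition (R2) gives $\mu_\varepsilon\ll{\rm Leb}$. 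Passing to the skew-product over ${\rm supp}(\theta_\varepsilon)^{\NN}$ and using the uniform bounded distortion along unstable plaques, a standard propagation argument --- pushing Lebesgue measure carried by an unstable disk forward under long random compositions --- then shows that $\mu_\varepsilon$, which already has absolutely continuous conditionals along unstable plaques since $\mu_\varepsilon\ll{\rm Leb}$, in fact has conditional densities along unstable plaques whose logarithms have oscillation bounded uniformly in $\varepsilon$; that is, each $\mu_\varepsilon$ is a random Gibbs $u$-state with uniform density bounds.

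Now take $\varepsilon_k\to0$ and a weak$^*$ accumulation point $\mu$ of $\mu_{\varepsilon_k}$. By persistence, the random unstable cone fields and the unstable plaques used for the maps $f_\omega$, $\omega\in{\rm supp}(\theta_{\varepsilon_k})$, converge as $k\to\infty$ to the strong unstable cone field and the strong unstable foliation $W^u$ of $f$; choosing these plaques coherently --- for instance as graphs over a fixed reference field approximating $E^u$ --- this convergence is $C^1$ and uniform. Since moreover the distortion bound is uniform and the conditional densities of $\mu_{\varepsilon_k}$ along unstable plaques are uniformly bounded, a compactness argument in foliated charts for $W^u$ --- disintegrate each $\mu_{\varepsilon_k}$, extract a limit of the conditionals, and use uniform boundedness of the densities --- shows that the conditionals of $\mu$ along $W^u$-plaques are absolutely continuous with respect to Lebesgue on those plaques. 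Thus $\mu$ is a Gibbs $u$-state of $f$, and by the reduction above this proves Theorem~\ref{TheoA}.

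The main obstacle, I expect, is the uniformity in the second step and its survival under the singular limit $\varepsilon\to0$ in the third: one must check that the bounded-distortion constant is genuinely uniform over $\V$ (this is where $C^2$, rather than merely $C^1$, regularity is essential) and organize the families of unstable plaques of the various $f_\omega$ coherently enough that they converge to honest $W^u$-plaques of $f$, so that the uniformly bounded $u$-conditional densities pass to a density of $\mu$ along the unperturbed strong unstable foliation. A secondary technical point, to be dealt with before the perturbative argument, is the first step's claim that ergodic Gibbs $u$-states of $f$ are physical --- i.e. that absolute continuity along $W^u$ propagates to the center-unstable Pesin manifolds --- and that there are finitely many of them; this is standard for mostly expanding and mostly contracting centers but must be set up with care.
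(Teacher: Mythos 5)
Your reduction is where the argument breaks. You reduce Theorem~\ref{TheoA} to the assertion that every zero-noise limit is a Gibbs $u$-state, on the grounds that for $f\in\U(M)$ ``every ergodic Gibbs $u$-state is a physical measure'', so that the whole set of Gibbs $u$-states coincides with the convex hull of the physical measures. That claim is false in this class, and absolute continuity along $W^u$ does \emph{not} propagate automatically to the center-unstable Pesin manifolds just because the exponents along $E^c_1$ are positive. A concrete counterexample sits inside $\U(M)$: take $f=A\times g$ on $\TT^2\times\TT^2$ with $A,g$ linear Anosov maps whose rates give the dominated splitting $E^u=E^u_A$, $E^c_1=E^u_g$, $E^c_2=E^s_g\oplus E^s_A$ (say eigenvalues $8,1/8$ for $A$ and $2,1/2$ for $g$). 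Since the maps are linear, every invariant measure has positive exponents along $E^u\oplus E^c_1$ and negative ones along $E^c_2$, so $f\in\U(M)$; but ${\rm vol}_{\TT^2}\times\delta_p$ ($p$ a fixed point of $g$) is an ergodic Gibbs $u$-state which is neither physical nor a Gibbs $cu$-state, and it is not in the convex hull of the unique physical measure ${\rm vol}_{\TT^4}$. Hence knowing that the zero-noise limit is a Gibbs $u$-state --- which is already classical (it is exactly \cite[Proposition 5]{CY05}, and is all that your steps two and three deliver, since uniform density bounds along strong-unstable plaques give no control whatsoever in the $E^c_1$ direction) --- does not imply stochastic stability.

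The genuinely hard content, and the one your proposal never touches, is upgrading the zero-noise limit from a Gibbs $u$-state to a Gibbs $cu$-state, i.e.\ absolute continuity of conditionals along Pesin unstable manifolds tangent to the larger bundle $E^{cu}=E^u\oplus E^c_1$. This is what the paper does: the lifted stationary measures $\mu_\varepsilon^*$ are random SRB measures by Ledrappier--Young (Lemma~\ref{rGibbs}), the uniform positivity/negativity of center exponents over Gibbs $u$-states plus a Pliss-type argument yield uniform mass of the $\mu_n^*$ on hyperbolic blocks $\B_\ell(\alpha)$ (Theorem~\ref{uni2}, Corollary~\ref{uni}), and the criterion Theorem~\ref{gibbse} passes the uniformly bounded conditional densities along random \emph{center-unstable} plaques to the limit, showing $\mu^*$ (hence $\mu$) is a Gibbs $cu$-state; only then do Lemmas~\ref{gibbsp1} and~\ref{um} place $\mu$ in the convex hull of the physical measures. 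To repair your argument you would have to carry out precisely this kind of uniform Pesin-block/foliated-chart analysis in the $E^{cu}$ direction, not merely along $E^u$.
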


We emphasize that $\U(M)$ is an open set which contains a large class of partially hyperbolic systems, such as mostly expanding diffeomorphisms and mostly contracting diffeomorphisms. These diffeomorphims has been studied extensively recently. Among which, the existence and finiteness of physical measures for mostly expanding diffeomorphisms and mostly contracting diffeomorphisms were given by Bonatti-Viana \cite{bv} and Andersson-V\'{a}squez \cite{AV15}, respectively. Then, Mi-Cao-Yang \cite{MCY17} gave the same result for diffeomorphisms in $\U(M)$. More recently, another stability called statistical stability was established for these diffeomorphisms \cite{AV17, yjg19, MC20}. 

We would like to remark that partially hyperbolic diffeomorphims introduced in \cite{ABV00} are also sometimes called mostly expanding diffeomorphisms, where the center direction admits the so called non-uniformly expanding behavior. Their stochastic stability was realized earlier by \cite{AAV07} when having persistent non-uniformly expanding behavior. We point out that it has been showed by Andersson-V\'{a}squez \cite[Theorem A]{AV15} that the non-uniformly expanding behavior is not a robust property.

%
%
We will prove Theorem \ref{TheoA} by showing that any zero noise limit measure is a Gibbs $cu$-state. The main step is to establish a sequence of hyperbolic blocks on product space by an argument on the uniformity of central Lyapunov exponents. A pivotal property is that any invariant measure close enough to the zero noise limit measure (lifted to the product space) has uniformly large measure on any fixed hyperbolic block with high level (Theorem \ref{uni2}). This also implies that these invariant measures are random Gibbs $cu$-states (Theorem \ref{gibbse}). By passing limit of the densities of random Gibbs $cu$-states restricted on hyperbolic blocks, one can realize the absolute continuity of zero noise limit measure.

\bigskip
{\bf{Acknowledgements.}} Z. Mi was supported by NSFC 11801278 and he would like to thank Pro. Yongluo Cao for his useful suggestions. 

\section{Preliminary}
Throughout, let $M$ be a compact Riemannian manifold with distance $d(\cdot,\cdot)$. Use ${\rm Leb}$ to denote the Lebesgue measure on $M$. Given a smooth sub-manifold $\gamma$, let ${\rm Leb}_{\gamma}$ represent the Lebesgue measure on $\gamma$ induced by the restriction of the Riemannian structure to $\gamma$.

Given a probability space $(X,\B, \mu)$, $\P$ is called a \emph{measurable partition} of $X$, if there exists a sequence of countable partitions $\{\P_k: k\in \NN\}$ of $X$ such that $\P=\bigvee_{k=0}^{\infty}\P_k$ (mod 0).
Let $\pi:X\to \P$ be the projection which maps each point of $X$ to the element of $\P$ containing it. Denote by $\hat{\mu}$ the quotient measure of $\mu$ w.r.t. $\P$ defined by $\hat{\mu}(A)=\mu(\pi^{-1}A)$ for every measurable subset $A$ of $\P$.
By Rokhlin's disintegration theorem \cite[Appendix C.4]{BDV05}, there exists a unique family of \emph{conditional measures} $\{\mu_{\gamma}: \gamma \in \P\}$ of $\mu$ w.r.t. $\P$ such that 
\begin{itemize}
\smallskip
\item $\mu_{\gamma}(\gamma)=1$ for $\hat{\mu}$-a.e. $\gamma\in \P$, and
\smallskip
\item for any measurable set $B$, $\gamma\mapsto \mu_{\gamma}(B)$ is measurable with 
$$
\mu(B)=\int \mu_{\gamma}(B) d\hat{\mu}.
$$
\end{itemize}

For simplicity, we will say that $\mu$ has \emph{absolutely continuous conditional measures along} $\P$, if $\mu_{\gamma}\ll{\rm Leb}_{\gamma}$ for $\hat{\mu}$-a.e. $\gamma\in \P$.

%

\subsection{SRB measures and Gibbs $cu$-states}
%
Let $f$ be a $C^2$ diffeomorphism on $M$.  Let $\mu$ be an $f$-invariant measure which admits positive Lyapunov exponents almost everywhere. By Pesin theory (see \cite{bp02}), we know that $\mu$-almost every $x$ admits a Pesin unstable manifold $W^{u}(x)$ charactered by
$$
W^u(x)=\Big\{y\in M:\limsup_{n\to +\infty}\frac{1}{n}\log d(f^{-n}(x),f^{-n}(y))<0\Big\}.
$$
A measurable partition $\xi$ is said to be \emph{subordinate to} $W^u$ w.r.t. $\mu$ if for $\mu$-almost every $x$, 
\begin{itemize}
\item $\xi(x)\subset W^u(x)$, where $\xi(x)$ is the element of $\xi$ containing $x$;
\smallskip
\item $\xi(x)$ contains an open neighborhood of $x$ inside $W^u(x)$.
\end{itemize}

One can give the precise definition of SRB measure as follows(see \cite{y02}):

\begin{definition}\label{Def:SRB}
Let $f$ be a $C^2$ diffeomorphism on $M$, an $f$-invariant measure $\mu$ is SRB if
\begin{itemize}
\smallskip
\item[--] it has positive Lyapunov exponents $\mu$-almost everywhere, 
\smallskip
\item[--] $\mu$ has absolutely continuous conditional measures along any measurable partition subordinate to $W^u$.
\end{itemize}
\end{definition}

Let $f$ be a diffeomorphism with a dominated splitting $TM=E^{cu}\oplus_{\succ} E^{cs}$, if $\mu$ is an $f$-invariant measure that has positive Lyapunov exponents along $E^{cu}$, then for
$\mu$-almost every $x$, there exists a Pesin unstable manifold $W^{cu}(x)$ of $x$ tangent to $E^{cu}$ with dimension ${\rm dim}E^{cu}$. Thus one can give the definition of Gibbs $cu$-state using measurable partition subordinate to $W^{E,u}$.

\begin{definition}\label{DCU}
Assume that $f$ is a $C^2$ diffeomorphism with a dominated splitting $TM=E^{cu}\oplus_{\succ} E^{cs}$.
An $f$-invariant measure $\mu$ is called a Gibbs $cu$-state if the Lyapunov exponents of $\mu$ along  $E^{cu}$ are positive and
$\mu$ has absolutely continuous conditional measures along any measurable partition subordinate to $W^{cu}$.
\end{definition}

\begin{lemma}\label{gibbsp1}\cite[Lemma 2.4]{cv}
Let $f$ be a $C^2$ diffeomorphism with dominated splitting $TM=E^{cu}\oplus_{\succ} E^{cs}$. Then almost every ergodic component of any Gibbs $cu$-state is a Gibbs $cu$-state.
\end{lemma}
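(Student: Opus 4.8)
The plan is to start from the ergodic decomposition $\mu=\int\mu_{\omega}\,dP(\omega)$ of the given Gibbs $cu$-state and to verify, for $P$-almost every $\omega$, the two requirements of Definition~\ref{DCU} for the component $\mu_{\omega}$. The exponent condition is the easy half. The $\dim E^{cu}$ largest Lyapunov exponents --- which, by the domination $E^{cu}\oplus_{\succ}E^{cs}$, are precisely the ones realized on $E^{cu}$ --- are measurable $f$-invariant functions that are positive on a set of full $\mu$-measure; integrating the indicator of that set against the ergodic decomposition shows it still has full $\mu_{\omega}$-measure for $P$-a.e.\ $\omega$, and since these exponents are $\mu_{\omega}$-a.e.\ constant, $\mu_{\omega}$ has only positive exponents along $E^{cu}$. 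In particular the Pesin center-unstable manifolds $W^{cu}(x)$, whose definition depends only on $x$ and $f$, are defined $\mu_{\omega}$-a.e. Likewise, fixing once and for all a measurable partition $\xi$ subordinate to $W^{cu}$ with respect to $\mu$, the two ($\mu$-almost sure, measurable) conditions of subordinacy pass to $\mu_{\omega}$ for $P$-a.e.\ $\omega$, so $\xi$ is subordinate to $W^{cu}$ also with respect to $\mu_{\omega}$.

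The core of the proof is to identify the conditional measures of $\mu_{\omega}$ along $\xi$ with those of $\mu$. Let $\mathcal{E}$ be the measurable partition of $M$ into ergodic components of $(f,\mu)$, so that $\mu_{\omega}=\mu_{\mathcal{E}(x)}$ for the component through a point $x$. The key geometric remark is that the unstable lamination refines $\mathcal{E}$: for $\mu$-a.e.\ $x$ one has $W^{cu}(x)\subseteq\mathcal{E}(x)$. Indeed, every $y\in W^{cu}(x)$ satisfies $d(f^{-n}x,f^{-n}y)\to 0$, hence has the same backward Birkhoff averages as $x$ for every continuous function on $M$; since a set is $f$-invariant if and only if it is $f^{-1}$-invariant, the ergodic partitions of $(f,\mu)$ and $(f^{-1},\mu)$ coincide mod $0$, and $\mathcal{E}(x)$ is exactly the common level set of these backward averages. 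Hence $\xi$ refines $\mathcal{E}$ (mod $0$), and transitivity of Rokhlin disintegration gives $\mu_{\xi(x)}=(\mu_{\mathcal{E}(x)})_{\xi(x)}=(\mu_{\omega})_{\xi(x)}$ for $\mu$-a.e.\ $x$. Since $\mu$ is a Gibbs $cu$-state and $\xi$ is subordinate to $W^{cu}$ with respect to $\mu$, the left-hand side is absolutely continuous with respect to ${\rm Leb}_{\xi(x)}$; therefore so is $(\mu_{\omega})_{\xi(x)}$, for $\mu_{\omega}$-a.e.\ $x$ and $P$-a.e.\ $\omega$. It then remains to recall the standard fact that absolute continuity of the conditionals along one subordinate partition forces it along every subordinate partition --- the densities being pinned down, up to normalization, by invariance of the disintegration (they are given by the usual unstable-Jacobian cocycle) --- whence $\mu_{\omega}$ is a Gibbs $cu$-state for $P$-a.e.\ $\omega$.

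I expect the main obstacle to be the geometric inclusion $W^{cu}(x)\subseteq\mathcal{E}(x)$ together with the bookkeeping of disintegrations built on it; the remaining steps are routine measure theory. Two minor points also deserve care: that ``positive exponents along $E^{cu}$'' is correctly read as the top $\dim E^{cu}$ exponents singled out by the dominated splitting, and that the fixed partition $\xi$ can be taken to be a genuine measurable partition of $M$ whose restriction to each ergodic component is again measurable, so that the transitivity formula is legitimately applied.
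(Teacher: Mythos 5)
Your argument is correct and is essentially the proof of the cited source: the paper gives no proof of Lemma \ref{gibbsp1} beyond quoting \cite[Lemma 2.4]{cv}, and the argument there is exactly yours --- backward Birkhoff averages are constant along Pesin $cu$-plaques (by the backward contraction characterizing $W^{cu}$), so the ergodic decomposition, realized as the level-set partition of those averages, is coarser mod $0$ than any subordinate partition, and transitivity of the Rokhlin disintegration transfers the absolutely continuous conditionals of $\mu$ to almost every ergodic component. The only points to keep precise are the ones you already flag: the inclusion $W^{cu}(x)\subseteq\mathcal{E}(x)$ should be read after realizing $\mathcal{E}$ as the level sets of backward averages of a countable dense family of continuous functions (a mod-$0$ refinement is all the disintegration step needs), and the independence of Definition \ref{DCU} from the choice of subordinate partition is the standard fact you invoke at the end.
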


\begin{lemma}\label{um}\cite[Theorem A]{MCY17}
Every $f\in \U(M)$ admits finitely many ergodic physical measures, they are also Gibbs $cu$-states.
\end{lemma}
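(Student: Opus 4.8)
The plan is to identify the physical measures of $f\in\mathcal U(M)$ with its ergodic Gibbs $cu$-states, establishing existence, finiteness, and the physical property in turn; this is \cite[Theorem A]{MCY17}, and I sketch the argument.

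First I would construct Gibbs $cu$-states directly. By the dominated splitting there is a $Df$-invariant cone field $\mathcal C^{cu}$ around $E^{cu}=E^u\oplus E^c_1$; fix a small embedded disk $D$ tangent to $\mathcal C^{cu}$ and form the averages $\nu_n=\frac{1}{n}\sum_{j=0}^{n-1}f^j_*\big({\rm Leb}_D/{\rm Leb}_D(D)\big)$. The mostly-expanding hypothesis says every Gibbs $u$-state has positive exponents along $E^{cu}$, and since the set of Gibbs $u$-states is weak$^*$-compact this estimate is uniform; hence ${\rm Leb}_D$-a.e.\ $x\in D$ has a positive density of center-unstable hyperbolic times, at which $f^n$ carries a uniform-size neighborhood of $x$ in $D$ onto a $cu$-disk of uniform size, bounded curvature, and bounded Jacobian distortion along the disk (the hyperbolic-times machinery of \cite{ABV00}). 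Pushing ${\rm Leb}_D$ forward along these good returns produces, along a positive-density sequence of times, measures whose conditionals on $cu$-plaques have densities bounded above and below; a standard limiting argument then shows that any weak$^*$ accumulation point $\mu$ of $(\nu_n)$ has positive exponents along $E^{cu}$ and absolutely continuous conditional measures along measurable partitions subordinate to $W^{cu}$, i.e.\ is a Gibbs $cu$-state. (Equivalently, one first checks that under mostly expanding every Gibbs $u$-state is already a Gibbs $cu$-state, so the set of Gibbs $cu$-states is nonempty because the set of Gibbs $u$-states always is.)

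Next I would bound the number of ergodic Gibbs $cu$-states; by Lemma \ref{gibbsp1} this controls all ergodic components of Gibbs $cu$-states. Let $\mu$ be ergodic Gibbs $cu$. The mostly-contracting hypothesis gives $\mu$ negative exponents along $E^{cs}=E^c_2$, hence a Pesin strong-stable manifold $W^{ss}(x)$ tangent to $E^{cs}$ at $\mu$-a.e.\ $x$. Choose a Pesin block $\Lambda$ with $\mu(\Lambda)>0$ on which the plaques $W^{cu}_{loc}$ (carrying their absolutely continuous conditionals) and $W^{ss}_{loc}$ have uniformly large size and depend continuously on the base point; crucially the non-uniform hyperbolicity constants, hence the size of such a block, can be chosen uniformly over all ergodic Gibbs $cu$-states, again by the compactness used above. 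Saturating a positive-measure subset of a single $cu$-plaque through $\Lambda$ by local strong-stable manifolds yields, via Birkhoff's theorem together with the absolute continuity of $\mu$ along $W^{cu}$ and Pesin's absolute continuity of the strong-stable lamination, a positive-Lebesgue-measure set, lying in a product box of inner radius bounded below independently of $\mu$, all of whose points belong to the ergodic basin $B(\mu)$. In particular ${\rm Leb}(B(\mu))>0$, so each ergodic Gibbs $cu$-state is physical; moreover $B(\mu)$ is, up to a Lebesgue-null set, saturated by whole strong-unstable leaves and by Pesin strong-stable plaques, hence essentially open, and since distinct ergodic measures have disjoint basins a volume count using the uniform inner radius gives finiteness. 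This is the scheme of \cite{bv,AV15} for mostly contracting centers.

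Conversely, every physical measure of $f$ is a Gibbs $u$-state \cite{BDV05}, hence a Gibbs $cu$-state by the first step, hence by Lemma \ref{gibbsp1} a convex combination of ergodic Gibbs $cu$-states; so the ergodic physical measures are exactly a subset of the finitely many ergodic Gibbs $cu$-states produced above, which is the assertion. The step I expect to resist is the distortion control in the construction: center-unstable expansion is only non-uniform, so one must route everything through hyperbolic times (equivalently Pesin blocks) before passing to a weak$^*$ limit, and must verify that absolute continuity survives that limit. A second essential point is the uniformity of the Pesin block over all ergodic Gibbs $cu$-states in the finiteness step; without it one learns only that each ergodic basin is essentially open, not that there are finitely many of them.
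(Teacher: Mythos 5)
This lemma is not proved in the paper at all: it is quoted verbatim from \cite[Theorem A]{MCY17}, so the only meaningful comparison is with the argument of that cited work. Your overall scheme (mostly expanding $\Rightarrow$ Gibbs $u$-states are Gibbs $cu$-states; mostly contracting along $E^c_2$ $\Rightarrow$ Bonatti--Viana/Andersson--V\'asquez finiteness and physicality via uniform-size Pesin stable disks and absolute continuity) is indeed the strategy of \cite{MCY17}, and your parenthetical remark ``every Gibbs $u$-state is already a Gibbs $cu$-state, and Gibbs $u$-states always exist'' is exactly the route taken there.

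However, your \emph{primary} construction has a genuine gap. You push forward ${\rm Leb}_D$ on an arbitrary disk $D$ tangent to the $cu$-cone and claim that ${\rm Leb}_D$-a.e.\ point has positive density of center-unstable hyperbolic times. The mostly expanding hypothesis of $\U(M)$ is a condition on \emph{Gibbs $u$-states} only; it gives (by compactness of the set of Gibbs $u$-states) uniformly positive exponents along $E^{cu}$ for those measures, but it does \emph{not} give non-uniform expansion along $E^{cu}$ for Lebesgue-a.e.\ point of an arbitrary $cu$-disk, which is what the hyperbolic-times machinery of \cite{ABV00} requires as input. This is precisely the difference between ``mostly expanding'' in the sense used here (\cite{AV15,MCY17}) and the non-uniformly expanding condition of \cite{ABV00}; the paper even stresses that the latter is not robust. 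The correct argument works at the level of measures: starting from the nonempty, compact, convex set of Gibbs $u$-states (obtained by iterating Lebesgue on \emph{strong-unstable} disks), one uses a Pliss-type argument for the measures themselves to upgrade each Gibbs $u$-state to a Gibbs $cu$-state. A second, smaller point: in your finiteness step you apply the mostly contracting hypothesis directly to an ergodic Gibbs $cu$-state $\mu$, but that hypothesis speaks about Gibbs $u$-states, so you must invoke the standard fact that every Gibbs $cu$-state is a Gibbs $u$-state (see e.g.\ \cite{cv,BDV05}) --- or simply run the whole argument over ergodic Gibbs $u$-states. With those two corrections your sketch matches the cited proof.
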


\subsection{Random perturbations}

Recall that for given $f\in {\rm Diff}^2(M)$, we define the regular random perturbation $\{T,\{\theta_{\varepsilon}\}_{\varepsilon>0}\}$ around $f$ by considering the continuous map $T:\T\to {\rm Diff}^2(M)$ with $\{\theta_{\varepsilon}\}_{\varepsilon>0}$ having properties (R\ref{R1}),(R\ref{R2}). For any small $\varepsilon>0$, put $\Omega_{\varepsilon}={\rm supp}(\theta_{\varepsilon}^{\ZZ})$, which is a compact subset of $\T^{\ZZ}$.
%

To study the random perturbation, it is useful to consider the random dynamical system on product space $\T^{\ZZ}\times M$ as follows:
\begin{align*}
\F: \T^{\mathbb{Z}}\times M & ~~\longrightarrow ~~\T^{\mathbb{Z}}\times M \\
(\underline{\omega}~,~x) &~~\longmapsto ~~(\sigma(\underline{\omega}),f_{\omega_0}(z)).
\end{align*}
where $\underline{\omega}=(\cdots, \omega_{-1},\omega_{0},\omega_{1}\cdots)\in \T^{\mathbb{Z}}$ and $\sigma$ is the left shift operator.
Let us define the projection map
\begin{align*}
P: \T^{\mathbb{Z}}\times M & ~~\longrightarrow ~~\T^{\mathbb{N}}\times M \\
(\underline{\omega}~,~x) &~~\longmapsto ~~(\underline{\omega}^{+}~, ~x)
\end{align*}
where $\underline{\omega}^{+}=(\omega_0,\omega_1,\cdots)\in \T^{\mathbb{N}}$ for $\underline{\omega}=(\cdots,\omega_{-1},\omega_0,\omega_1,\cdots)\in \T^{\mathbb{Z}}$.

Let $d_{\T}$ be a distance in $\T$, which can generate the distance $d_{\T^{\ZZ}}$ on $\T^{\ZZ}$ defined by
$$
d_{\T^{\ZZ}}(\underline{\omega},\underline{\omega}')=\sum_{n=-\infty}^{+\infty}\frac{d_{\T}(\omega_n,\omega'_n)}{2^{|n|}},
$$
for $\underline{\omega}=(\cdots, \omega_{-1},\omega_{0},\omega_{1}\cdots),\underline{\omega}'=(\cdots, \omega_{-1}',\omega_{0}',\omega_{1}'\cdots)\in \T^{\ZZ}$.
Given $r>0$, denote by
$$
U(\underline{\omega},x;r)=\left\{(\underline{\omega'},x')\in \T^{\ZZ}\times M: d_{\T^{\ZZ}}(\underline{\omega}, \underline{\omega'})\le r; d(x,x')\le r\right\}
$$ 
the compact ball of radius $r$ around $(\underline{\omega},x)$.

For each $\underline{\omega}=(\cdots, \omega_{-1},\omega_{0},\omega_{1}\cdots)\in \T^{\mathbb{Z}}$,
we use the presentation
$$
f_{\underline{\omega}}^n=\left\{
\begin{array}{cl}
  f_{\omega_{n-1}}\circ\cdots \circ f_{\omega_{0}} & ~~n>0~; \\
  id &~~ n=0~;\\
  f_{\omega_n}^{-1}\circ\cdots \circ f_{\omega_{-1}}^{-1} &~~n<0~.
\end{array}\right.
$$

\begin{lemma}\cite[Chapter 1.2, Proposition 1.2]{LiQ95}\label{G}
Let $\mu_{\varepsilon}$ be a stationary measure with respect to $\theta_{\varepsilon}$. Then there is a unique $\F$-invariant measure $\mu_{\varepsilon}^{\ast}$ on $\T^\ZZ\times M$ such that $\mu_{\varepsilon}^*\circ P^{-1}=\theta_{\varepsilon}^\NN\times \mu_{\varepsilon}$.
\end{lemma}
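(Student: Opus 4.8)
The plan is to realise $(\T^{\ZZ}\times M,\F)$ as the natural extension of a one-sided skew product and to lift $\mu_{\varepsilon}$ through it by a Kolmogorov-type inverse limit argument. Write $\nu_{\varepsilon}=\theta_{\varepsilon}^{\NN}\times\mu_{\varepsilon}$, let $\sigma^{+}$ be the one-sided shift on $\T^{\NN}$, and let $\F^{+}:\T^{\NN}\times M\to \T^{\NN}\times M$, $\F^{+}(\underline{\omega}^{+},x)=(\sigma^{+}(\underline{\omega}^{+}),f_{\omega_{0}}(x))$, so that $P\circ\F=\F^{+}\circ P$. \emph{Step 1 (stationarity $\Leftrightarrow$ $\F^{+}$-invariance).} For a cylinder $A=\{\omega_{0}\in A_{0},\dots,\omega_{n}\in A_{n}\}$ and Borel $B\subseteq M$ one computes $(\F^{+})^{-1}(A\times B)=\{(\underline{\omega}^{+},x):\omega_{1}\in A_{0},\dots,\omega_{n+1}\in A_{n},\ x\in f_{\omega_{0}}^{-1}(B)\}$, whence, by Fubini and independence of the coordinates under $\theta_{\varepsilon}^{\NN}$,
$$
\nu_{\varepsilon}\big((\F^{+})^{-1}(A\times B)\big)=\Big(\int_{\T}\mu_{\varepsilon}\big(f_{\omega}^{-1}B\big)\,d\theta_{\varepsilon}(\omega)\Big)\prod_{i=0}^{n}\theta_{\varepsilon}(A_{i}),
$$
which equals $\nu_{\varepsilon}(A\times B)=\mu_{\varepsilon}(B)\prod_{i=0}^{n}\theta_{\varepsilon}(A_{i})$ for all such $A,B$ precisely when $\int\mu_{\varepsilon}\circ f_{\omega}^{-1}\,d\theta_{\varepsilon}(\omega)=\mu_{\varepsilon}$. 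Since these sets generate the Borel $\sigma$-algebra, $\F^{+}_{*}\nu_{\varepsilon}=\nu_{\varepsilon}$ iff $\mu_{\varepsilon}$ is stationary.

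\emph{Step 2 (inverse-limit structure).} For $m\ge 0$ put $q_{m}=P\circ\F^{-m}:\T^{\ZZ}\times M\to\T^{\NN}\times M$; explicitly $q_{m}(\underline{\omega},x)=\big((\omega_{-m},\omega_{-m+1},\dots),\,f_{\underline{\omega}}^{-m}(x)\big)$. A direct computation gives the identities $q_{0}=P$, $\F^{+}\circ q_{m+1}=q_{m}$, $q_{m}\circ\F=q_{m-1}$ for $m\ge1$, and $q_{0}\circ\F=\F^{+}\circ q_{0}$. From $q_{m}$ one recovers $x$ and every coordinate $\omega_{j}$ with $j\ge-m$, so the maps $q_{m}$ jointly separate points and generate the Borel $\sigma$-algebra of $\T^{\ZZ}\times M$; hence $(\underline{\omega},x)\mapsto(q_{m}(\underline{\omega},x))_{m\ge0}$ identifies $(\T^{\ZZ}\times M,\F)$ with the projective limit $\varprojlim(\T^{\NN}\times M,\F^{+})$ (all bonding maps equal to $\F^{+}$), under which $P=q_{0}$ is the projection to the $0$-th factor. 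Everything may be restricted to the compact metric subspace ${\rm supp}(\theta_{\varepsilon})^{\NN}\times M$, which is $\F^{+}$-invariant and carries $\nu_{\varepsilon}$.

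\emph{Step 3 (lift, invariance, uniqueness).} By Step 1 the constant family $(\nu_{\varepsilon})_{m\ge0}$ is consistent along this projective system (consistency being exactly $\F^{+}_{*}\nu_{\varepsilon}=\nu_{\varepsilon}$), so the inverse-limit theorem for Borel probability measures on Polish (here compact metric) spaces yields a unique Borel probability $\mu_{\varepsilon}^{\ast}$ on $\T^{\ZZ}\times M$ with $(q_{m})_{*}\mu_{\varepsilon}^{\ast}=\nu_{\varepsilon}$ for all $m$; in particular $\mu_{\varepsilon}^{\ast}\circ P^{-1}=(q_{0})_{*}\mu_{\varepsilon}^{\ast}=\theta_{\varepsilon}^{\NN}\times\mu_{\varepsilon}$. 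For $\F$-invariance, the identities of Step 2 together with $\F^{+}_{*}\nu_{\varepsilon}=\nu_{\varepsilon}$ give $(q_{m})_{*}(\F_{*}\mu_{\varepsilon}^{\ast})=\nu_{\varepsilon}$ for every $m$, so $\F_{*}\mu_{\varepsilon}^{\ast}=\mu_{\varepsilon}^{\ast}$ by uniqueness. Finally, if $\mu'$ is any $\F$-invariant measure with $\mu'\circ P^{-1}=\theta_{\varepsilon}^{\NN}\times\mu_{\varepsilon}$, then $(q_{m})_{*}\mu'=P_{*}(\F^{-m}_{*}\mu')=P_{*}\mu'=\nu_{\varepsilon}$ for all $m$, so $\mu'=\mu_{\varepsilon}^{\ast}$; this proves uniqueness of the lift.

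\emph{Where the work is.} The statement is classical (it is \cite[Chapter 1.2, Proposition 1.2]{LiQ95}) and there is no conceptual obstacle; the only care points are the index bookkeeping in Step 2 — the cocycle relations $\F^{+}\circ q_{m+1}=q_{m}$ and $q_{m}\circ\F=q_{m-1}$ — and verifying the hypotheses of the inverse-limit extension theorem, which is why one works on the compact metric space ${\rm supp}(\theta_{\varepsilon}^{\ZZ})\times M$ on which $\mu_{\varepsilon}^{\ast}$ is actually supported.
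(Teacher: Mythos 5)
Your proof is correct: the identification of $(\T^{\ZZ}\times M,\F)$ with the natural extension (projective limit) of the one-sided skew product, the equivalence of stationarity with $\F^{+}$-invariance of $\theta_{\varepsilon}^{\NN}\times\mu_{\varepsilon}$, and the lift via the inverse-limit theorem with uniqueness from the maps $q_m=P\circ\F^{-m}$ is exactly the classical argument behind the cited result. The paper offers no proof of its own here — it simply quotes \cite[Chapter 1.2, Proposition 1.2]{LiQ95}, whose proof is this same natural-extension construction — so your write-up matches the intended route, including the sensible restriction to the compact support ${\rm supp}(\theta_{\varepsilon})^{\ZZ}\times M$ where the measure actually lives.
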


As a consequence of Lemma \ref{G}, we have the following result, see \cite[Lemma 3.4]{MDCDS} for a proof.
\begin{corollary}
Assume that $\mu_{\varepsilon_n}$ is a stationary measure for $\theta_{\varepsilon_n}$ for any $n\in \NN$. If $\mu_n\to\mu$ and $\theta_{\varepsilon_n}\to\theta$ as $n\to +\infty$, then $\mu_{\varepsilon_n}^*\to \mu^*$ as $n\to +\infty$. Furthermore, if we assume $\varepsilon_n\to 0$ as $n\to +\infty$, then we have that 
$$
\lim_{n\to +\infty}\mu_{\varepsilon_n}^*=\delta_{\omega_f}^{\ZZ}\times \mu.
$$
\end{corollary}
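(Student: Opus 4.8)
The plan is to split the statement into two parts: first the continuity of the assignment $(\mu_{\varepsilon},\theta_{\varepsilon})\mapsto \mu_{\varepsilon}^{*}$, and then the identification of the zero-noise limit. For the first part, I would use Lemma \ref{G} to characterize $\mu_{\varepsilon_n}^{*}$ as the unique $\F$-invariant measure whose $P$-pushforward is $\theta_{\varepsilon_n}^{\NN}\times\mu_{\varepsilon_n}$. Since $\T^{\ZZ}\times M$ is compact, the sequence $\{\mu_{\varepsilon_n}^{*}\}$ has weak$^{*}$ accumulation points; fix one, say $\nu=\lim_{k}\mu_{\varepsilon_{n_k}}^{*}$ along a subsequence. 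Because $\F$ is continuous, $\nu$ is $\F$-invariant. Pushing forward under the continuous map $P$ and using $\mu_{\varepsilon_n}\to\mu$, $\theta_{\varepsilon_n}\to\theta$ (so $\theta_{\varepsilon_n}^{\NN}\to\theta^{\NN}$ weakly on $\T^{\NN}$, which follows since convergence of the one-step marginals gives convergence of all finite-dimensional marginals, hence weak$^{*}$ convergence on the product), we get $\nu\circ P^{-1}=\theta^{\NN}\times\mu$. By the uniqueness clause of Lemma \ref{G} applied to the stationary measure $\mu$ for $\theta$, this forces $\nu=\mu^{*}$. Since every accumulation point equals $\mu^{*}$ and the space is compact metrizable, $\mu_{\varepsilon_n}^{*}\to\mu^{*}$.

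For the second part, assume in addition $\varepsilon_n\to 0$. Then by (R\ref{R1}), ${\rm supp}(\theta_{\varepsilon_n})\to\{\omega_f\}$, so $\theta_{\varepsilon_n}\to\delta_{\omega_f}$ weakly on $\T$; consequently $\theta_{\varepsilon_n}^{\ZZ}\to\delta_{\omega_f}^{\ZZ}=\delta_{\underline{\omega}_f}$ on $\T^{\ZZ}$, where $\underline{\omega}_f$ is the constant sequence. The zero-noise limit $\mu$ is $f$-invariant (stated in the text), and it is trivially stationary for $\theta=\delta_{\omega_f}$ since $f_{\omega_f}=f$. Applying the first part with $\theta=\delta_{\omega_f}$ gives $\mu_{\varepsilon_n}^{*}\to\mu^{*}$, where $\mu^{*}$ is the unique $\F$-invariant measure with $\mu^{*}\circ P^{-1}=\delta_{\omega_f}^{\NN}\times\mu$. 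It remains to check that this $\mu^{*}$ is exactly $\delta_{\omega_f}^{\ZZ}\times\mu$: the latter is supported on $\{\underline{\omega}_f\}\times M$, on which $\F$ acts as $(\underline{\omega}_f,x)\mapsto(\underline{\omega}_f,f(x))$, so $f$-invariance of $\mu$ gives $\F$-invariance of $\delta_{\omega_f}^{\ZZ}\times\mu$, and its $P$-image is visibly $\delta_{\omega_f}^{\NN}\times\mu$; uniqueness in Lemma \ref{G} then identifies it with $\mu^{*}$.

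The only mildly delicate point is the passage $\theta_{\varepsilon_n}\to\theta$ on $\T$ implies $\theta_{\varepsilon_n}^{\NN}\to\theta^{\NN}$ (and $\theta_{\varepsilon_n}^{\ZZ}\to\theta^{\ZZ}$) weakly on the countable product; this is standard (test against cylinder functions depending on finitely many coordinates, which are dense in the relevant sense, together with tightness, which is automatic since the supports are contained in a fixed compact set by (R\ref{R1})). I expect the main structural obstacle to be nothing deeper than correctly invoking the uniqueness in Lemma \ref{G} in the limiting regime $\theta=\delta_{\omega_f}$, since Lemma \ref{G} as stated is about stationary measures with respect to $\theta_{\varepsilon}$; one must observe that the lemma and its uniqueness conclusion apply verbatim to the degenerate noise $\delta_{\omega_f}$, for which ``stationary'' just means $f$-invariant. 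Given that, everything else is a routine compactness-and-continuity argument, and no computation beyond pushing measures through continuous maps is needed.
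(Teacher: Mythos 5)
Your argument is correct and follows the route the paper intends: the paper gives no in-text proof, presenting the corollary precisely as a consequence of Lemma \ref{G} (citing \cite[Lemma 3.4]{MDCDS}), and your accumulation-point plus uniqueness-of-$\mu^*$ argument is exactly that derivation. Two small points to record explicitly: $\T^{\ZZ}\times M$ itself need not be compact (the needed tightness comes from the nested compact supports in (R\ref{R1}), as you note), and before invoking the uniqueness clause of Lemma \ref{G} in the first part one should add the one-line check that the limit $\mu$ is stationary with respect to $\theta$, obtained by passing the stationarity identity to the limit using the same product-measure convergence you already use.
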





\subsection{Random SRB measures and Gibbs $cu$-states}
Let $f\in {\rm Diff}^2(M)$ and fix a stationary measure $\mu$ with respect to $\theta_{\varepsilon}$ for some $\varepsilon>0$. Let $\mu^*$ be the unique $\F$-invariant measure associated to $\mu$ given by Lemma \ref{G}.


By the result of \cite[Chapter VI: Proposition 1.2]{LiQ95}, for $\mu^*$-almost every $(\underline{\omega},x)$, 
there exists the \emph{random Lyapunov exponent} of $(\underline{\omega},x)$ at $v\in T_xM\setminus \{0\}$ defined by
$$
\lim_{n\rightarrow \pm \infty}\frac{1}{n}\log \|Df_{\underline{\omega}}^{n}(x)v\|.
$$
Analogous to the deterministic case, we know that when $(\underline{\omega},x)$ admits positive random Lyapunov exponents, there exists the random unstable manifold $W^{u}(\underline{\omega},x)$, which is a $C^1$ sub-manifold tangent to sub-tangent space at $(\underline{\omega},x)$ spanned by vectors having positive random Lyapunov exponents.

\medskip
A measurable partition $\xi$ is \emph{subordinate} to $W^{u}$ w.r.t. $\mu^*$ if for $\mu^*$-almost every $(\underline{\omega},x)$, 
\begin{itemize}
\item
\smallskip
$\xi(\underline{\omega},x)\subset \{\underline{\omega}\}\times M$, and
\item
\smallskip
$\xi_{\underline{\omega}}(x)=\{y:(\underline{\omega},y)\in \xi(\underline{\omega},x)\}\subset W^{u}(\underline{\omega},x)$,
\smallskip
\item
$\xi_{\underline{\omega}}(x)$ contains an open neighborhood of $x$ inside $W^{u}(\underline{\omega},x)$.
\end{itemize}

Let us recall the definition of random SRB measure as follows.


\begin{definition}
We say that $\mu^*$ is a random SRB measure if 
\begin{itemize}
\item
there exist positive random Lyapunov exponents for $\mu^{\ast}$-almost every $(\underline{\omega},x)$,
\item
for any measurable partition $\xi$ subordinate to $W^{u}$, $\mu^{\ast}$ has absolutely continuous conditional measures along $\xi$.
\end{itemize}
\end{definition}

\begin{lemma}\cite[Theorem B]{LY88}\label{rGibbs}
For a regular random perturbation, if $\mu$ is a stationary measure, then $\mu^*$ is a random SRB measure.
\end{lemma}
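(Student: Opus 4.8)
**Proof proposal for Lemma \ref{rGibbs} (random SRB property of $\mu^*$).**

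The plan is to reduce the statement to the absolute-continuity criterion of Ledrappier--Young \cite{LY88}, which says that a random invariant measure is a random SRB measure precisely when its entropy (in the sense of random dynamical systems) equals the sum of its positive random Lyapunov exponents (Pesin's entropy formula holds), the equality characterizing absolute continuity of the conditional measures along unstable manifolds. Since the statement I am asked to prove \emph{is} \cite[Theorem B]{LY88}, the ``proof'' here is really an application/translation: I would first verify that a regular random perturbation satisfies the hypotheses under which that theorem applies. Concretely, one must check that the family $\{f_\omega\}$ is $C^2$ (so that the usual Pesin-theoretic machinery, including the existence of random unstable manifolds $W^u(\underline\omega,x)$ stated in the excerpt, is available), that the skew product $\F$ preserves $\mu^*$ (this is Lemma \ref{G}), and that the sample maps and their derivatives are integrable with respect to $\mu^*$, i.e. $\log^+\|Df_\omega\|$ and $\log^+\|Df_\omega^{-1}\|$ are in $L^1$; for a regular random perturbation this follows from the compactness of ${\rm supp}(\theta_\varepsilon)$ together with the continuity of $T$, which makes these quantities uniformly bounded.

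The second and more substantive ingredient is the entropy formula. I would invoke the random version of the Pesin entropy formula: for a $C^2$ random dynamical system generated by i.i.d. maps preserving $\mu^*$, one has
$$
h_{\mu^*}(\F) = \int \sum_i \lambda_i^+(\underline\omega,x)\, d\mu^*(\underline\omega,x),
$$
where $h_{\mu^*}(\F)$ is the fiber (relative) entropy and $\lambda_i^+$ are the positive random Lyapunov exponents. This formula is precisely what is established for stationary measures of random perturbations under the integrability hypotheses above (this is the content of \cite{LY88} and of the Kifer/Ledrappier--Young line of work), the key point being that stationarity with $P(x,\cdot)\ll {\rm Leb}$ — property (R\ref{R2}) — gives the system enough ``smoothing'' so that the a priori inequality $h_{\mu^*}(\F)\le\int\sum\lambda_i^+\,d\mu^*$ is in fact an equality. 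Once the entropy formula is in hand, the characterization theorem of Ledrappier--Young for random systems yields that $\mu^*$ has absolutely continuous conditional measures along any measurable partition $\xi$ subordinate to $W^u$, which is exactly the definition of a random SRB measure; the positivity of some random Lyapunov exponents for $\mu^*$-a.e. point (needed for $W^u$ to be nontrivial) is part of the same package, since if all exponents were nonpositive the entropy formula would force $h_{\mu^*}(\F)=0$ and the statement would be vacuous but still literally true with the trivial partition.

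The main obstacle — the only place where genuine work beyond citation is hidden — is verifying that the abstract hypotheses of \cite[Theorem B]{LY88} are met by the notion of ``regular random perturbation'' used in this paper, in particular the absolute continuity property (R\ref{R2}) of the transition probabilities, which is the hypothesis that drives the entropy formula. I would therefore spend the bulk of the argument recalling the setup of \cite{LiQ95} and \cite{LY88}, matching it term-by-term with the data $(T,\{\theta_\varepsilon\})$, and noting that $\mu^* = \mu^*_\varepsilon$ constructed in Lemma \ref{G} is exactly the invariant measure to which \cite{LY88} applies; the rest is a direct quotation. I expect no technical surprises beyond this bookkeeping, since $C^2$ regularity and compact support of the noise make all integrability and measurability requirements automatic.
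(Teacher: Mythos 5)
Your proposal is correct and follows essentially the same route as the paper, which offers no argument beyond the citation of \cite[Theorem B]{LY88}: the content of the lemma \emph{is} that theorem, and the only work is the bookkeeping you describe (the maps are $C^2$ with noise supported on a compact set, so the integrability conditions hold; $\mu^*$ is the $\F$-invariant lift from Lemma \ref{G}; and property (R\ref{R2}) supplies the absolute continuity of transition probabilities that drives the entropy formula and hence the SRB property). The only caveat is your closing aside: by the paper's Definition of random SRB measure, positivity of some random Lyapunov exponents is part of the conclusion rather than something that could be dispensed with ``vacuously,'' but in the setting where the lemma is used the presence of the uniformly expanding bundle $E^u$ makes this automatic.
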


By robustness of domination property, we have 
\begin{lemma}\label{fdpd}\cite[Proposition 2.4]{M18S}
Let $f$ be a $C^2$ diffeomorphism admitting the dominated splitting $TM=E^{cu}\oplus_{\succ} E^{cu}$. Then there is $\varepsilon_0>0$ such that there exists the randomly dominated splitting 
$$
T_xM=E^{cu}(\underline{\omega},x)\oplus E^{cs}(\underline{\omega},x),\quad \forall (\underline{\omega},x)\in \Omega_{\varepsilon_0}\times M,
$$
which admits following properties:
\begin{itemize}
\smallskip
\item $Df_{\underline{\omega}}E^{cs}(\underline{\omega},x)=E^{cs}(\F(\underline{\omega},x))$ and $Df_{\underline{\omega}}E^{cu}(\underline{\omega},x)=E^{cu}(\F(\underline{\omega},x))$.
\smallskip
\item both $E^{cu}(\underline{\omega},x)$ and $E^{cs}(\underline{\omega},x)$ depend continuously on $(\underline{\omega},x)$.
\end{itemize}
\end{lemma}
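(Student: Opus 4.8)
The plan is to produce the random splitting from robust invariant cone fields rather than from any Oseledets or Pesin construction, so that the two bundles are defined at \emph{every} point of $\Omega_{\varepsilon_0}\times M$ and vary continuously there. The starting point is to encode the deterministic dominated splitting $TM=E^{cu}\oplus_{\succ}E^{cs}$ of $f$ by a pair of continuous, complementary cone fields $x\mapsto C^{cu}(x)$ and $x\mapsto C^{cs}(x)$, with $C^{cu}(x)$ a cone around $E^{cu}(x)$ and $C^{cs}(x)$ a cone around $E^{cs}(x)$. After passing to an adapted metric, domination lets me arrange single-step strict invariance with a uniform rate $\lambda<1$:
$$
Df\big(\overline{C^{cu}(x)}\setminus\{0\}\big)\subset {\rm int}\,C^{cu}(f(x)),\qquad Df^{-1}\big(\overline{C^{cs}(x)}\setminus\{0\}\big)\subset {\rm int}\,C^{cs}(f^{-1}(x)),
$$
together with the quantitative estimate that controls the expansion of $C^{cu}$ relative to $C^{cs}$.

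These cone relations are determined by $Df$ through finitely much data, hence are open in the $C^1$ topology; so I would fix a $C^1$-neighborhood $\mathcal V$ of $f$ on which the same relations hold with a single uniform rate $\lambda'<1$. Since $T$ is continuous and, by (R\ref{R1}), ${\rm supp}(\theta_{\varepsilon})\to\{\omega_f\}$ as $\varepsilon\to 0$, there is $\varepsilon_0>0$ with $f_{\omega}=T(\omega)\in\mathcal V$ for every $\omega\in{\rm supp}(\theta_{\varepsilon_0})$. As the support of a product measure is the product of the supports, every $\underline\omega\in\Omega_{\varepsilon_0}$ has all of its coordinates in ${\rm supp}(\theta_{\varepsilon_0})$, so each factor of every composition $f_{\underline\omega}^{\,n}$ obeys the uniform cone estimates.

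Writing $x_k$ for the $M$-component of $\F^{k}(\underline\omega,x)$, I would then define
$$
E^{cu}(\underline\omega,x)=\bigcap_{n\ge 0}Df_{\sigma^{-n}\underline\omega}^{\,n}\big(C^{cu}(x_{-n})\big),\qquad E^{cs}(\underline\omega,x)=\bigcap_{n\ge 0}\big(Df_{\underline\omega}^{\,n}(x)\big)^{-1}\big(C^{cs}(x_{n})\big).
$$
By the single-step strict invariance and the uniform rate $\lambda'$, each of these is a nested decreasing sequence of cones whose angular diameters contract geometrically; hence every intersection is a single linear subspace of the correct dimension, and the quantitative estimate forces $E^{cu}(\underline\omega,x)\oplus E^{cs}(\underline\omega,x)=T_xM$ with a uniform angle, which is precisely the asserted random domination. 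The invariance $Df_{\omega_0}E^{cu}(\underline\omega,x)=E^{cu}(\F(\underline\omega,x))$ and $Df_{\omega_0}E^{cs}(\underline\omega,x)=E^{cs}(\F(\underline\omega,x))$ then follows by reindexing the defining intersections under the shift.

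It remains to check continuity. Each finite-stage subspace $\bigcap_{k=0}^{n}Df_{\sigma^{-k}\underline\omega}^{\,k}(C^{cu}(x_{-k}))$ varies continuously on $\Omega_{\varepsilon_0}\times M$, since $(\underline\omega,x)\mapsto Df_{\underline\omega}^{\,k}(x)$ is continuous by continuity of $T$ and of $x\mapsto Df_\omega(x)$, and the cone field is continuous; the uniform geometric contraction makes these finite-stage subspaces converge to $E^{cu}(\underline\omega,x)$ uniformly in the Grassmannian, so the limit is continuous, and symmetrically for $E^{cs}$. I expect the main obstacle to be exactly this continuity in the noise coordinate $\underline\omega\in\T^{\ZZ}$: $E^{cu}$ depends on the entire past and $E^{cs}$ on the entire future of $\underline\omega$, i.e.\ on infinitely many coordinates, so continuity in the product metric $d_{\T^{\ZZ}}$ is not automatic. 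It is rescued by the uniform rate $\lambda'$, which makes the influence of the coordinates $\omega_{\pm n}$ on the bundles decay exponentially in $n$; balancing this exponential insensitivity against the weights $2^{-|n|}$ in $d_{\T^{\ZZ}}$ is the quantitative core of the argument and the step demanding the most care.
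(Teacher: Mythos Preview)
The paper does not prove this lemma; it is quoted from the external preprint \cite{M18S}, so there is no in-paper argument to compare against. Your cone-field construction is the standard route to robustness of dominated splittings and is correct in outline: strictly invariant complementary cones with a uniform contraction rate persist on a $C^1$-neighborhood $\mathcal V$ of $f$, (R\ref{R1}) lets you choose $\varepsilon_0$ so that all $f_\omega$ with $\omega\in{\rm supp}(\theta_{\varepsilon_0})$ lie in $\mathcal V$, and the nested intersections of iterated cones define the bundles.

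Two small comments. First, your finite-stage intersections $\bigcap_{k=0}^{n}Df_{\sigma^{-k}\underline\omega}^{\,k}(C^{cu}(x_{-k}))$ are cones, not subspaces; it is only the infinite intersection that collapses to a subspace. Second, your worry about continuity in $\underline\omega$ is overstated. The $n$-th stage cone depends only on the finitely many coordinates $\omega_{-n},\dots,\omega_{-1}$, so it is automatically continuous on $\T^{\ZZ}$ in the product topology, with no reference to the weights $2^{-|n|}$ in $d_{\T^{\ZZ}}$. Since these continuous cone-valued maps converge uniformly (the diameters shrink at rate $(\lambda')^n$, independently of $(\underline\omega,x)$), the limit $E^{cu}(\underline\omega,x)$ is continuous as a uniform limit of continuous Grassmannian-valued maps. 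No balancing of exponential rates against metric weights is required.
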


For the rest of this section, we assume that $f$ is a $C^2$ diffeomorphism exhibiting the dominated splitting $TM=E^{cu}\oplus_{\succ} E^{cu}$, and take $\varepsilon_0$ as in Lemma \ref{fdpd}.

Given a stationary measure $\mu$ w.r.t. $\theta_{\varepsilon}$ for $\varepsilon\le \varepsilon_0$.
Let $\mu^*$ be the $\F$-invariant measure associated to $\mu$ given by Lemma \ref{G}. It is known that if $\mu^*$ has only positive random Lyapunov exponents along $E^{cu}$, then there exists random Pesin unstable manifold $W^{cu}(\underline{\omega},x)$ tangent to $E^{cu}$ for $\mu^*$-almost every $(\underline{\omega},x)$.
We say that $\xi$ is a measurable partition \emph{subordinate} to $W^{cu}$ w.r.t. $\mu^{\ast}$ if for $\mu^*$-almost every $(\underline{\omega},x)$, $\xi(\underline{\omega},x)\subset \{\underline{\omega}\}\times M$, and $\xi_{\underline{\omega}}(x)=\{y:(\underline{\omega},y)\in \xi(\underline{\omega},x)\}\subset W^{cu}(\underline{\omega},x)$ contains a neighborhood of $x$ open in $W^{cu}(\underline{\omega},x)$.

\begin{definition}\label{cud}
Let $\mu$ be a stationary measure w.r.t. $\theta_{\varepsilon}$ for $\varepsilon\le \varepsilon_0$, and 
$\mu^*$ is the $\F$-invariant measure associated to $\mu$. We say that $\mu^*$ is a random Gibbs $cu$-state, if all the random Lyapunov exponents of $\mu^*$ along $E^{cu}$ are positive, and for any measurable partition $\xi$ subordinate to $W^{cu}$, $\mu^*$ has absolutely continuous conditional measures along $\xi$. 
\end{definition}



\begin{lemma}\label{gibbsp}
Let $\mu$ be an $f$-invariant measure, then it is a Gibbs $cu$-state of $f$ if and only if $\mu^{\ast}=\delta_{\omega_f}^{\ZZ}\times \mu$ is a random Gibbs $cu$-state.
\end{lemma}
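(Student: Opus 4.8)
The statement is an equivalence between a deterministic property of an $f$-invariant measure $\mu$ and a random property of its trivial lift $\mu^*=\delta_{\omega_f}^{\ZZ}\times\mu$ under the skew product $\F$. My first task is to match the two sides dictionary by dictionary. On the set $\{\omega_f\}^{\ZZ}\times M$ the dynamics $\F$ reduces to $(\underline\omega,x)\mapsto(\underline\omega, f(x))$, so the orbit of $(\underline\omega,x)$ under $\F$ is just the $f$-orbit of $x$ sitting in a fixed fibre. Consequently $Df_{\underline\omega}^n(x)=Df^n(x)$ for every $n$, and so the random Lyapunov exponents of $\mu^*$ at $(\underline\omega,x)$ coincide with the (deterministic) Lyapunov exponents of $\mu$ at $x$; likewise the randomly dominated splitting of Lemma~\ref{fdpd} restricts on this fibre to the deterministic dominated splitting $TM=E^{cu}\oplus_{\succ}E^{cs}$, because invariance plus continuity pin it down uniquely. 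Hence ``$\mu^*$ has positive random Lyapunov exponents along $E^{cu}$'' is literally the same statement as ``$\mu$ has positive Lyapunov exponents along $E^{cu}$'', and the random Pesin unstable manifold $W^{cu}(\underline\omega,x)$ equals $\{\underline\omega\}\times W^{cu}(x)$, where $W^{cu}(x)$ is the deterministic Pesin unstable manifold of $f$ at $x$.

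Next I would transfer the statement about conditional measures. The key point is that $\mu^*=\delta_{\omega_f}^{\ZZ}\times\mu$ is a product, so disintegrating $\mu^*$ along any measurable partition $\xi$ subordinate to $W^{cu}$ (all of whose elements lie in a single fibre, by definition) amounts to disintegrating $\mu$ along the corresponding partition $\bar\xi$ of $M$ whose atoms are $\bar\xi(x)=\xi_{\underline\omega}(x)$, with the parameter $\underline\omega=\omega_f^{\ZZ}$ fixed $\delta_{\omega_f}^{\ZZ}$-a.s. More precisely, given a measurable partition $\xi$ of $\T^\ZZ\times M$ subordinate to $W^{cu}$ w.r.t.\ $\mu^*$, the collection $\bar\xi=\{\xi_{\omega_f^\ZZ}(x):x\in M\}$ is a measurable partition of $M$ subordinate to $W^{cu}$ w.r.t.\ $\mu$, and conversely any such $\bar\xi$ lifts to $\xi:=\{\{\underline\omega\}\times\bar\xi(x)\}$. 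Under this correspondence the conditional measures satisfy $\mu^*_{\xi(\underline\omega,x)}=\delta_{\underline\omega}\times\mu_{\bar\xi(x)}$ for $\mu^*$-a.e.\ $(\underline\omega,x)$, which is a routine consequence of the uniqueness in Rokhlin's disintegration theorem applied to the product measure. Since $\mathrm{Leb}$ on the fibre submanifold $\{\underline\omega\}\times\xi_{\underline\omega}(x)$ is identified with $\mathrm{Leb}$ on $\xi_{\underline\omega}(x)\subset W^{cu}(x)$, absolute continuity of $\mu^*_{\xi(\underline\omega,x)}$ w.r.t.\ Lebesgue on the fibre is equivalent to absolute continuity of $\mu_{\bar\xi(x)}$ w.r.t.\ $\mathrm{Leb}_{\bar\xi(x)}$.

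Putting these two translations together gives both implications at once: $\mu$ is a Gibbs $cu$-state $\iff$ ($\mu$ has positive exponents along $E^{cu}$ and a.c.\ conditionals along every subordinate $\bar\xi$) $\iff$ ($\mu^*$ has positive random exponents along $E^{cu}$ and a.c.\ conditionals along every subordinate $\xi$) $\iff$ $\mu^*$ is a random Gibbs $cu$-state. For the forward direction one starts with an arbitrary $\xi$ subordinate to $W^{cu}$ w.r.t.\ $\mu^*$, passes to $\bar\xi$, uses the Gibbs $cu$ property of $\mu$; for the reverse one starts with an arbitrary $\bar\xi$ subordinate to $W^{cu}$ w.r.t.\ $\mu$ (these exist by Pesin theory) and lifts it. I should be slightly careful that the family of partitions $\xi$ subordinate to $W^{cu}$ for $\mu^*$ is exactly the family of lifts of partitions subordinate to $W^{cu}$ for $\mu$ — this uses that on the fibre $\{\omega_f^\ZZ\}\times M$ the random unstable manifolds are the deterministic ones, established in the first paragraph.

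**Main obstacle.** None of the steps is deep; the only genuinely fiddly point is the measure-theoretic bookkeeping in the second paragraph — verifying that disintegration commutes with taking a product with a Dirac mass, and that the correspondence $\xi\leftrightarrow\bar\xi$ is a bijection between the relevant classes of measurable partitions modulo $\mu^*$- (resp.\ $\mu$-) null sets. This is where one must invoke the uniqueness clause of Rokhlin's theorem carefully and check that measurability is preserved in both directions. Everything about Lyapunov exponents and Pesin manifolds is immediate once one observes that $\F$ acts as $\mathrm{id}\times f$ on the support of $\delta_{\omega_f}^\ZZ\times\mu$, and the continuity/uniqueness of the dominated splitting from Lemma~\ref{fdpd} handles the identification of $E^{cu}$.
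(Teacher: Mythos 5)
The paper states Lemma \ref{gibbsp} without proof, treating it as a routine consequence of the definitions, so there is no argument of the author's to compare against; your write-up supplies exactly the translation one would expect. Your proof is correct: since $\delta_{\omega_f}^{\ZZ}$ is the point mass at the constant sequence, $\F$ acts as $\mathrm{id}\times f$ on the full-$\mu^*$-measure set $\{\omega_f^{\ZZ}\}\times M$, uniqueness of the dominated splitting of prescribed index identifies $E^{cu}(\omega_f^{\ZZ},x)$ with $E^{cu}(x)$ (hence random exponents and random Pesin manifolds with their deterministic counterparts), and the uniqueness clause of Rokhlin disintegration applied to the product $\delta_{\omega_f}^{\ZZ}\times\mu$ gives the bijection $\xi\leftrightarrow\bar\xi$ between subordinate partitions together with $\mu^*_{\xi(\underline\omega,x)}=\delta_{\omega_f^{\ZZ}}\times\mu_{\bar\xi(x)}$, from which both implications follow at once.
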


\section{A criterion on random Gibbs $cu$-states}
Throughout this section, we assume that $f$ is a $C^2$ diffeomorphism exhibiting the dominated splitting $TM=E^{cu}\oplus_{\succ} E^{cs}$. 
Let us fix $\varepsilon_0$ as in Lemma \ref{fdpd}, thus there exists the random dominated splitting $E^{cu}(\underline{\omega},x)\oplus E^{cs}(\underline{\omega},x)$ over $T_xM$ for $(\underline{\omega},x)\in \Omega_{\varepsilon_0}\times M$.
For simplicity, we say that $\mu$ is a stationary measure of $\{T,\{\theta_{\varepsilon}\}_{0<\varepsilon\le \varepsilon_0}\}$ if it is a stationary measure w.r.t $\theta_{\varepsilon}$ for some $\varepsilon\le \varepsilon_0$.

\medskip
Given $\alpha>0$ and $\ell\in \NN$, we refer to the compact subset 
\begin{align*}
&\B_{\ell}(\alpha)=\Big\{(\underline{\omega},x)\in \Omega_{\varepsilon_0}\times M: \prod_{i=0}^{n-1}\|Df^{\ell}_{\sigma^{i\ell}\underline{\omega}}|_{E^{cs}(\F^{i\ell}(\underline{\omega},x))}\|\le {\rm e}^{-\alpha \ell n},\\
&\quad\quad\quad\quad\quad\quad\quad ~\prod_{i=0}^{n-1}\|Df^{-\ell}_{\sigma^{-i\ell}\underline{\omega}}|_{E^{cu}(\F^{-i\ell}(\underline{\omega},x))}\|\le {\rm e}^{-\alpha \ell n},\quad \forall n\in \NN\Big\}.
\end{align*}
as a \emph{hyperbolic block} of level $\ell$ w.r.t. $\alpha$.

The main goal of this section is to show the following result, whose proof will be given in $\S$\ref{se}.
\begin{theorem}\label{gibbse}
Let $\mu_n$ be a sequence of stationary measures of $\{T,\{\theta_{\varepsilon}\}_{0<\varepsilon\le \varepsilon_0}\}$ that converges to $\mu$. If for every $\delta>0$, there exists $\ell\in \NN$ such that
$$
\limsup_{n\to +\infty}\mu^{\ast}_n(\B_{\ell}(\alpha))>1-\delta,
$$
then $\mu^{\ast}$ is a random Gibbs $cu$-state.
\end{theorem}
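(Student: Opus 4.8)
The plan is to transport the absolute continuity of the conditional measures from the random SRB measures $\mu^*_n$ to the limit $\mu^*$, using the uniform hyperbolicity available on the blocks $\B_{\ell}(\alpha)$ as the only source of compactness, and then to exhaust $M$ by letting $\delta\to0$. After passing to a subsequence (this is automatic when $\varepsilon_n\to0$, the case needed for Theorem \ref{TheoA}) we may assume $\theta_{\varepsilon_n}\to\theta$; then $\mu$ is stationary with respect to $\theta$ and, by the corollary following Lemma \ref{G}, $\mu^*_n\to\mu^*$ weakly. Since $\B_{\ell}(\alpha)$ is compact, weak* convergence gives $\mu^*(\B_{\ell}(\alpha))\ge\limsup_n\mu^*_n(\B_{\ell}(\alpha))$, so by hypothesis for every $\delta>0$ there is $\ell=\ell(\delta)$ with $\mu^*(\B_{\ell}(\alpha))>1-\delta$, whence $\mu^*\big(\bigcup_{\ell}\B_{\ell}(\alpha)\big)=1$. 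On $\B_{\ell}(\alpha)$ the defining inequalities force every vector of $E^{cu}$ (resp.\ $E^{cs}$) to be contracted at exponential rate by the backward (resp.\ forward) random cocycle, so by the multiplicative ergodic theorem for the random cocycle \cite{LiQ95} all random Lyapunov exponents of $\mu^*$ along $E^{cu}$ are $\ge\alpha>0$; this settles the first clause of Definition \ref{cud} and produces, for $\mu^*$-a.e.\ $(\underline{\omega},x)$, a random Pesin manifold $W^{cu}(\underline{\omega},x)$ that on the blocks coincides locally with the full random unstable manifold.

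The heart of the argument is the control of the $cu$-geometry and of the conditional densities on a fixed block $\B_{\ell}(\alpha)$. Using the ``for all $n$'' backward contraction along $E^{cu}$ together with the domination of Lemma \ref{fdpd}, a graph-transform argument for the backward dynamics produces, for each $(\underline{\omega},x)\in\B_{\ell}(\alpha)$, an embedded disk $W^{cu}_{r_0}(\underline{\omega},x)\subset W^{cu}(\underline{\omega},x)$ of a uniform radius $r_0=r_0(\ell)>0$ tangent to $E^{cu}$, depending continuously on the point and shrinking exponentially under backward iteration. Since the generators $f_\omega$ are $C^2$ with all relevant norms bounded uniformly over $\omega$ in the compact set $\Omega_{\varepsilon_0}$, one obtains the usual bounded-distortion estimate, uniform in $n$, in the point of the block and in $k\ge1$:
$$
\Big|\log\frac{\big|\det\big(Df^{-k}_{\underline{\omega}}\big|_{T_yW^{cu}}\big)\big|}{\big|\det\big(Df^{-k}_{\underline{\omega}}\big|_{T_zW^{cu}}\big)\big|}\Big|\le C\,d(y,z)^{\tau},\qquad y,z\in W^{cu}_{r_0}(\underline{\omega},x).
$$

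Now, since $\mu^*_n$ is a random SRB measure (Lemma \ref{rGibbs}) and, on $\B_{\ell}(\alpha)$, the disks $W^{cu}_{r_0}$ are pieces of its random unstable manifolds, the density of the conditional measure of $\mu^*_n$ along such a disk with respect to Lebesgue is the standard convergent infinite product of ratios of backward Jacobians; the displayed estimate together with the exponential backward contraction makes this product converge with bounds $\exp(\pm C')$ and a uniform $\tau$-H\"older modulus, all depending only on $\ell$. Thus on $\B_{\ell}(\alpha)$ the conditional densities of the $\mu^*_n$ form a uniformly bounded, equicontinuous family. One then covers $\mu^*$-almost every point of the block by countably many lamination boxes $N=\Sigma\times W^{cu}_{r_0}$ foliated by the disks $\{D_p\}_{p\in\Sigma}$ of the above construction with a continuous transverse structure; inside $N$ each $\mu^*_n$ disintegrates as $\int\rho^p_n\,d{\rm Leb}_{D_p}\,d\hat\mu_n(p)$. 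Passing to a further subsequence, the transverse measures $\hat\mu_n$ converge to some $\hat\mu$ on $\Sigma$, and by the Arzel\`a--Ascoli theorem the densities $\rho^p_n$ converge to limit densities $\rho^p$ satisfying the same bounds; checking that $\int\rho^p\,d{\rm Leb}_{D_p}\,d\hat\mu(p)=\mu^*|_N$ shows that $\mu^*$ restricted to each box has absolutely continuous conditional measures along the $cu$-disks. Since the absolute-continuity property along $W^{cu}$ is independent of the chosen subordinate partition, $\mu^*|_{\B_{\ell(\delta)}(\alpha)}$ has absolutely continuous conditionals along every measurable partition subordinate to $W^{cu}$; letting $\delta\to0$ and using $\mu^*\big(\bigcup_{\ell}\B_{\ell}(\alpha)\big)=1$ upgrades this to $\mu^*$ itself, and together with the Lyapunov-exponent estimate above this shows that $\mu^*$ is a random Gibbs $cu$-state in the sense of Definition \ref{cud}.

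The step I expect to be the main obstacle is the one carried out in the second and third paragraphs: producing local $cu$-manifolds of uniform size and genuinely uniform bounded distortion over the non-invariant block $\B_{\ell}(\alpha)$ for the random backward dynamics --- here the ``for all $n$'' form of the block inequalities and the uniform $C^2$ bounds on $\{f_\omega\}_{\omega\in\Omega_{\varepsilon_0}}$ must be exploited carefully --- together with the identification, on the block, of the conditional densities of the random SRB measures $\mu^*_n$ with the controlled Jacobian product. The weak* passage to the limit is then a soft compactness argument.
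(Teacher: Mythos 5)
Your proposal follows essentially the same route as the paper: weak* convergence plus compactness of the blocks gives $\mu^{\ast}\big(\bigcup_{\ell}\B_{\ell}(\alpha)\big)=1$; the random SRB property of the $\mu^{\ast}_n$ (Lemma \ref{rGibbs}) combined with the uniform backward contraction and bounded distortion on a fixed block yields uniformly bounded conditional densities along stacks of local random unstable disks; and absolute continuity is then transported to the limit over foliated boxes, which is exactly the paper's use of Proposition \ref{pu}, Lemma \ref{hee}, Lemma \ref{foli}, Lemma \ref{vp} and Lemma \ref{eui}. The only noteworthy difference is in the final limit step: you invoke Arzel\`a--Ascoli on the densities and identify the limit disintegration with $\mu^{\ast}|_N$, whereas the paper avoids this by passing to the limit in the product-set inequality $\nu_n(\zeta\times\eta)\le C\,{\rm Leb}_{\DD^{cu}}(\eta)\,\hat\nu_n(\zeta)$ on boxes chosen with $\mu^{\ast}$-negligible boundary (see (\ref{se}) and Lemma \ref{vp}) --- a boundary-negligibility choice that your identification step also implicitly requires and should be stated.
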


\subsection{Foliated charts associated to stack of unstable manifolds}
%
%
%
%
%
%
Let $\DD^{cu}$ be the ${\rm dim}E^{cu}$-dimensional compact unit ball of $\RR^{{\rm dim}E^{cu}}$. Let ${\rm Emb^1}(\DD^{cu},M)$ be the space of $C^1$ embeddings from $\DD^{cu}$ to $M$.

Let us recall the following folklore result which can be deduced from the classic Plaque family theorem \cite[Theorem 5.5]{HPS77}, see \cite[Theorem 3.5]{AV17} also.
\begin{proposition}\label{pu}
Given $\alpha>0$ and $\ell \in \NN$, there exist $C_{\ell,\alpha}>0$, $r_{\ell,\alpha}>0$, and a continuous map $\Theta: \B_{\ell}(\alpha)\to {\rm Emb^1}(\DD^{cu},M)$ such that for each $(\underline{\omega},x)\in \B_{\ell}(\alpha)$, we have
\begin{itemize}
\item $\Theta(\underline{\omega},x)(\DD^{cu}):=W^u_{r_{\ell,\alpha}}(\underline{\omega},x)$ has radius $r_{\ell,\alpha}$ centered at $x$;
\smallskip
\item for each $y\in W^u_{r_{\ell,\alpha}}(\underline{\omega},x)$, it has that
$$
T_yW^u_{r_{\ell,\alpha}}(\underline{\omega},x)=E^{cu}(\underline{\omega},y);
$$
\item if $y\in W^u_{r_{\ell,\alpha}}(\underline{\omega},x)$, then 
$$
d(f_{\underline{\omega}}^{-n}(x), f_{\underline{\omega}}^{-n}(y))\le C_{\ell,\alpha}{\rm e}^{-\alpha\ell n/2}d(x,y),\quad \forall n\in \NN.
$$
\end{itemize}

\end{proposition}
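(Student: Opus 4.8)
The plan is to produce the family $W^u_{r_{\ell,\alpha}}(\underline\omega,x)$ in two moves: first apply the classical Plaque Family Theorem to the skew product $\F$ over the compact invariant set $\Omega_{\varepsilon_0}\times M$, obtaining a locally invariant family of $C^1$ discs tangent to $E^{cu}$; then restrict to the hyperbolic block $\B_{\ell}(\alpha)$, where the uniform estimates in the definition of $\B_{\ell}(\alpha)$ put us in a hyperbolic regime, and upgrade those plaques to genuine local unstable manifolds with the three asserted properties.

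First I would fix the ambient data. Since ${\rm supp}(\theta_{\varepsilon_0})$ is compact and $T:\T\to{\rm Diff}^2(M)$ is continuous, $\Omega_{\varepsilon_0}$ is compact and $\sigma$-invariant, the cocycle $(\underline\omega,x)\mapsto Df_{\omega_0}(x)$ is continuous with uniform $C^2$-bounds, and $\F$ restricted to the compact $\F$-invariant set $\Omega_{\varepsilon_0}\times M$ is a $C^2$ bundle map over $(\Omega_{\varepsilon_0},\sigma)$. By Lemma \ref{fdpd} this bundle carries the continuous $D\F$-invariant dominated splitting $E^{cu}\oplus E^{cs}$. Applying the Plaque Family Theorem \cite[Theorem 5.5]{HPS77} to $\F^\ell$ — in the random form recalled in \cite[Theorem 3.5]{AV17} — I would get $\rho_0>0$ and a continuous map $\Psi:\Omega_{\varepsilon_0}\times M\to{\rm Emb^1}(\DD^{cu},M)$ whose images $\mathcal{W}_{\rho_0}(\underline\omega,x)$ are $C^1$ discs of inner radius $\rho_0$ centered at $x$, tangent to $E^{cu}(\underline\omega,x)$ at $x$, and locally invariant under $f_{\underline\omega}^{-\ell}$ in the sense that $f_{\underline\omega}^{-\ell}\big(\mathcal{W}_{\rho_0}(\underline\omega,x)\big)$ lies in $\mathcal{W}_{\rho_0}\big(\F^{-\ell}(\underline\omega,x)\big)$ whenever its inner radius about $f_{\underline\omega}^{-\ell}(x)$ does not exceed $\rho_0$.

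Next I would fix $(\underline\omega,x)\in\B_{\ell}(\alpha)$ and run a graph-transform contraction along its backward $\F^\ell$-orbit, which stays in $\Omega_{\varepsilon_0}\times M$ where the splitting is continuous and the $C^2$-bounds uniform. The definition of $\B_{\ell}(\alpha)$ supplies the crucial input: the $E^{cu}$-cocycle contracts in backward time along this orbit at the uniform rate $e^{-\alpha\ell n}$; combined with domination — which keeps each plaque uniformly $C^1$-close to the affine disc tangent to $E^{cu}$, so the nonlinear part of $f_{\underline\omega}^{-\ell}$ restricted to it is a small perturbation of its linear part — a bounded-distortion estimate yields a radius $r_{\ell,\alpha}\in(0,\rho_0]$, depending only on $\alpha,\ell$ and the fixed data, such that for every $y$ in the sub-disc $W^u_{r_{\ell,\alpha}}(\underline\omega,x)$ of radius $r_{\ell,\alpha}$ about $x$ one has, by induction on $n$, $f_{\underline\omega}^{-n\ell}(y)\in\mathcal{W}_{\rho_0}\big(\F^{-n\ell}(\underline\omega,x)\big)$ and $d\big(f_{\underline\omega}^{-n\ell}(x),f_{\underline\omega}^{-n\ell}(y)\big)\le C_{\ell,\alpha}\,e^{-\alpha\ell n/2}\,d(x,y)$. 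Interpolating over the intermediate iterates $0\le j<\ell$ with the uniform Lipschitz bound of the $f_\omega^{-1}$ (and enlarging $C_{\ell,\alpha}$) gives the third bullet. For the second bullet, since the backward orbit of any $y\in W^u_{r_{\ell,\alpha}}(\underline\omega,x)$ shadows that of $x$ exponentially, iterating the (dominated, hence cone-contracting) Grassmannian transfer forward along that backward orbit forces $T_yW^u_{r_{\ell,\alpha}}(\underline\omega,x)=E^{cu}(\underline\omega,y)$, which also identifies the disc with the genuine local unstable manifold. Defining $\Theta$ as $\Psi$ restricted and rescaled so that $\Theta(\underline\omega,x)(\DD^{cu})=W^u_{r_{\ell,\alpha}}(\underline\omega,x)$, continuity of $\Theta$ on $\B_{\ell}(\alpha)$ follows from continuity of $\Psi$ and of $E^{cu}$, together with $r_{\ell,\alpha}$ being constant on $\B_{\ell}(\alpha)$.

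The hard part will be the uniformity in the third step: the HPS plaques are only \emph{locally} invariant, so $r_{\ell,\alpha}$ must be chosen — the same for every point of $\B_{\ell}(\alpha)$ — small enough that the whole backward orbit of the sub-disc never escapes the plaques of size $\rho_0$. This is precisely where the \emph{uniform} (rather than merely asymptotic) exponential estimate built into $\B_{\ell}(\alpha)$ is essential, via an induction on $n$ that simultaneously propagates the inclusion $f_{\underline\omega}^{-n\ell}(y)\in\mathcal{W}_{\rho_0}(\F^{-n\ell}(\underline\omega,x))$ and the contraction bound; the constants come out uniform because the defining inequalities of $\B_{\ell}(\alpha)$ hold with the same $\alpha,\ell$ at every point. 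Upgrading tangency from the base point to every point of the disc is the other non-formal point, but it is handled by the standard contraction of the graph transform on the Grassmannian bundle associated with the dominated splitting.
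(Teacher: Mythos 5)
Your proposal is correct and follows exactly the route the paper indicates: the paper gives no written proof, citing this as a folklore consequence of the Plaque Family Theorem \cite[Theorem 5.5]{HPS77} (see also \cite[Theorem 3.5]{AV17}), and your argument fills in precisely that derivation --- locally invariant plaques tangent to $E^{cu}$, then the uniform backward contraction built into $\B_{\ell}(\alpha)$ propagated by induction/graph transform to get a uniform radius $r_{\ell,\alpha}$, the exponential shadowing bound, and tangency at every point via domination of the splitting.
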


We will say that $W^u_{r_{\ell,\alpha}}(\underline{\omega},x)$ is a local random unstable manifold (of size $r_{\ell,\alpha}$) of $(\underline{\omega},x)$. It can also be seen as a disk in the product space $\Omega_{\varepsilon_0}\times M$, by identifying $\{\underline{\omega}\}\times W^u_{r_{\ell,\alpha}}(\underline{\omega},x)$ with $W^u_{r_{\ell,\alpha}}(\underline{\omega},x)$.

\begin{remark}
Actually, the existence of $W^u_r(\underline{\omega},x)$ is deduced only by domination and the backward contracting property on $E^{cu}$. The domination and forward contracting property of $E^{cs}$ imply the existence of local random stable manifolds tangent to $E^{cs}$, though we do not use this fact.
\end{remark}

We have the Lipschitz continuity of random local unstable manifolds, which is a generalization of its deterministic version (see e.g. \cite[$\S$ 6.1]{HP68}). One can see \cite[Appendix A]{MDCDS} for a direct proof.
\begin{lemma}\label{hee}
Given $\alpha>0$ and $\ell\in \NN$.
There exists a constant $L>0$ such that for any $(\underline{\omega},x)\in \B_{\ell}(\alpha)$,
we have
$$
\rho\left(T_yW^u_{r_{\ell,\alpha}}(\underline{\omega},x),T_zW^u_{r_{\ell,\alpha}}(\underline{\omega},x)\right)\le Ld(y,z)
$$
whenever $y,z\in W^u_{r_{\ell,\alpha}}(\underline{\omega},x)$.
\end{lemma}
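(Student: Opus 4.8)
The plan is to carry out, in the present random setting, the classical graph‑transform argument of Hirsch--Pugh \cite[\S 6.1]{HP68} for the Lipschitz continuity of the unstable distribution along an unstable leaf; the random version is essentially \cite[Appendix A]{MDCDS}, so I will only describe the mechanism. By Proposition \ref{pu} one has $T_wW^u_{r_{\ell,\alpha}}(\underline\omega,x)=E^{cu}(\underline\omega,w)$ for every $w\in W^u_{r_{\ell,\alpha}}(\underline\omega,x)$, so the claim is that, along one leaf, the a priori merely continuous field $w\mapsto E^{cu}(\underline\omega,w)$ is Lipschitz with a constant depending only on $\ell$ and $\alpha$. (Equivalently: each leaf, seen in charts as a graph over $E^{cu}$, is of class $C^{1+\mathrm{Lip}}$, its tangent field being the fixed point of the graph transform acting on $1$‑jets, which preserves a set of uniformly bounded $C^{1+\mathrm{Lip}}$ graphs.) First I would reduce to the case of $y,z\in W^u_{r_{\ell,\alpha}}(\underline\omega,x)$ that are close: the general estimate then follows by chaining along the connected leaf with the same constant.

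Fix such $y,z$, write $y_n=f^{-n}_{\underline\omega}(y)$, $z_n=f^{-n}_{\underline\omega}(z)$ and $\underline\omega_n=\sigma^{-n}\underline\omega$, and set $\rho_n=\rho(E^{cu}(\underline\omega_n,y_n),E^{cu}(\underline\omega_n,z_n))$. In its leafwise form the last item of Proposition \ref{pu} gives $d(y_n,z_n)\le C_{\ell,\alpha}\,{\rm e}^{-\alpha\ell n/2}\,d(y,z)$ for all $n$; since $E^{cu}$ is uniformly continuous on the compact set $\Omega_{\varepsilon_0}\times M$ (Lemma \ref{fdpd}), taking $d(y,z)$ small makes \emph{all} the $\rho_n$ small at once. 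I will work in exponential charts of a fixed size along the orbit of $x$ — which contains the orbits of $y$ and $z$ in a uniform neighbourhood — representing $\dim E^{cu}$‑planes lying in a thin cone around $E^{cu}$ as graphs of linear maps $E^{cu}\to E^{cs}$, and the leaves as graphs over $E^{cu}$; the uniform $C^1$–$C^2$ bounds on the $f_\omega$ and the continuity of the splitting control the nonlinear terms, after shrinking $r_{\ell,\alpha}$ if needed.

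The core is a one‑step inequality. Since $E^{cu}$ is tangent to the unstable leaves, the forward derivative cocycle attracts towards $E^{cu}$ in the tangent Grassmannian, and the domination inequality between $E^{cu}$ and $E^{cs}$ turns this into a uniform fibrewise contraction: there is $\nu<1$, depending only on the domination constant, such that $\rho(Df_{\omega_{-1}}(y_1)V,\,Df_{\omega_{-1}}(y_1)V')\le \nu\,\rho(V,V')$ whenever $V,V'$ lie in a fixed cone around $E^{cu}(\underline\omega_1,y_1)$. Applying this to $V=E^{cu}(\underline\omega_1,y_1)$ and $V'=E^{cu}(\underline\omega_1,z_1)$ — legitimate once $y,z$ are close, since then $\rho_1$ is small — and using $E^{cu}(\underline\omega,w)=Df_{\omega_{-1}}(w_1)\,E^{cu}(\underline\omega_1,w_1)$ for $w\in\{y,z\}$ together with $\|Df_{\omega_{-1}}(y_1)-Df_{\omega_{-1}}(z_1)\|\le\kappa\,d(y_1,z_1)$ (the $C^2$ bound, $\kappa$ uniform), I get $\rho_0\le\nu\,\rho_1+\kappa\,d(y_1,z_1)$, and inductively $\rho_0\le\nu^n\rho_n+\kappa\sum_{k=1}^n\nu^{k-1}d(y_k,z_k)$. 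Letting $n\to\infty$, the term $\nu^n\rho_n$ vanishes and, inserting the bound on $d(y_k,z_k)$, the series converges: $\rho_0\le\big(\kappa\,C_{\ell,\alpha}\sum_{k\ge1}\nu^{k-1}{\rm e}^{-\alpha\ell k/2}\big)\,d(y,z)=:L\,d(y,z)$, with $L$ finite and independent of $(\underline\omega,x)\in\B_{\ell}(\alpha)$ and of $y,z$. Since $\rho_0=\rho(T_yW^u_{r_{\ell,\alpha}}(\underline\omega,x),T_zW^u_{r_{\ell,\alpha}}(\underline\omega,x))$, this is Lemma \ref{hee}.

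The arithmetic of the telescoping is routine; the real work — and the reason the statement is quoted from \cite[Appendix A]{MDCDS} rather than reproved here — is the uniformity in $\underline\omega$. One must check that the charts, the cone around $E^{cu}$, the contraction constant $\nu$ and the Lipschitz bound $\kappa$ on $w\mapsto Df_\omega(w)$ can all be taken independent of $\omega$ ranging over the compact sets ${\rm supp}(\theta_\varepsilon)$, $\varepsilon\le\varepsilon_0$; and that the one‑step inequality is valid at every level, i.e.\ that the planes $E^{cu}(\underline\omega_n,y_n),E^{cu}(\underline\omega_n,z_n)$ always stay in the cone where the linearised Grassmannian dynamics genuinely contracts — which is exactly why the reduction to $y,z$ close, combined with the backward‑contraction estimate of Proposition \ref{pu}, is needed. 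A further, standard point is the careful passage between the Grassmannian distance $\rho$ and the graph‑function description in charts. Note that no extra hyperbolicity on $\B_{\ell}(\alpha)$ is required here: domination alone gives the fibrewise contraction, and the block enters only through the leafwise backward contraction of Proposition \ref{pu}.
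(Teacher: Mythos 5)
Your proposal is the standard graph--transform/telescoping argument for Lipschitz continuity of the unstable distribution along a leaf (domination gives a uniform Grassmannian contraction, the $C^2$ bound gives the $\kappa\,d(y_k,z_k)$ error, and the backward contraction on the plaque makes the series summable), which is precisely the route of the deterministic proof in \cite[\S 6.1]{HP68} and of its random version in \cite[Appendix A]{MDCDS} that the paper cites instead of giving an inline proof, so you are taking essentially the same approach and the argument is correct. The one point to keep in mind is that you invoke a \emph{leafwise} backward-contraction estimate for arbitrary pairs $y,z\in W^u_{r_{\ell,\alpha}}(\underline{\omega},x)$, which is slightly stronger than the literal last item of Proposition \ref{pu} (stated only relative to the center $x$); this strengthening is indeed part of the plaque construction carried out in the cited reference, and you correctly flag that the uniformity details live there.
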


%
\medskip
Let us fix $\alpha>0$ now. For every $\ell\in \NN$, let $\B_{\ell}:=\B_{\ell}(\alpha)$ and $r_{\ell}:=r_{\ell,\alpha}$ for simplicity. 
Take $r\ll r_{\ell}$, for every $(\underline{\omega},x)\in \Omega_{\varepsilon_0}\times M$, define
$$
\P_{\ell}(\underline{\omega},x; r):=\left\{W^u_r(\underline{\omega}',x')\cap U(\underline{\omega},x;r): (\underline{\omega'},x')\in U(\underline{\omega},x; r/2)\cap \B_{\ell}\right\},
$$
which is a stack of random local unstable manifolds.
We denote by $S_{\ell}(\underline{\omega},x;r)$ the union of elements of $\P_{\ell}(\underline{\omega},x;r)$. 

\begin{lemma}\label{eui}
Assume that $\mu$ is an $\F$-invariant measure.
If $\B_{\ell_{n}}$ is a sequence of hyperbolic blocks such that 
\begin{enumerate}[1.]
\smallskip
\item
$\mu(\bigcup_{n=1}^{\infty}\B_{\ell_n})=1,$ 
\smallskip
\item\label{t} for every $\ell_n$, for every $(\underline{\omega},x)\in {\rm supp}(\mu|\B_{\ell_n})$, $\mu$ has absolutely continuous conditional measures along $\P_{\ell_n}(\underline{\omega},x;r)$ for some $r\ll r_{\ell_n}$,
\end{enumerate}
then $\mu$ is a random Gibbs $cu$-state.
\end{lemma}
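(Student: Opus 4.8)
The plan is to verify for $\mu$ the two properties defining a random Gibbs $cu$-state: that the random Lyapunov exponents along $E^{cu}$ are positive, and that $\mu$ has absolutely continuous conditional measures along every measurable partition subordinate to $W^{cu}$. Positivity of the exponents is immediate from the blocks: for $(\underline{\omega},x)\in\B_{\ell_n}(\alpha)$ the second product in the definition of $\B_{\ell_n}(\alpha)$, together with submultiplicativity of the cocycle for $\F^{-\ell_n}$, gives $\|Df^{-\ell_n m}_{\underline{\omega}}|_{E^{cu}(\underline{\omega},x)}\|\le e^{-\alpha\ell_n m}$ for all $m\in\NN$, so along the subsequence $\ell_n m$ every unit vector of $E^{cu}(\underline{\omega},x)$ is contracted backward at rate at least $\alpha$; since $\mu\big(\bigcup_n\B_{\ell_n}\big)=1$ and the Oseledets theorem applies $\mu$-a.e., all Lyapunov exponents of $\mu$ along $E^{cu}$ are $\ge\alpha>0$ at $\mu$-a.e.\ point. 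Hence, by the random Pesin theory recalled in Section~2, the random Pesin unstable manifold $W^{cu}(\underline{\omega},x)$ tangent to $E^{cu}$ is defined for $\mu$-a.e.\ $(\underline{\omega},x)$, and by uniqueness of unstable manifolds together with Proposition \ref{pu}, each plaque $W^u_{r_{\ell_n}}(\underline{\omega},x)$ with $(\underline{\omega},x)\in\B_{\ell_n}$ is an open neighbourhood of $x$ inside $W^{cu}(\underline{\omega},x)$.

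Next I would turn the local information of hypothesis 2 into a single global subordinate partition. For each $n$, compactness of ${\rm supp}(\mu|\B_{\ell_n})$ yields finitely many centres $(\underline{\omega}_{n,j},x_{n,j})\in{\rm supp}(\mu|\B_{\ell_n})$ and radii $r_{n,j}\ll r_{\ell_n}$, $1\le j\le J_n$, whose half-balls $U(\underline{\omega}_{n,j},x_{n,j};r_{n,j}/2)$ cover ${\rm supp}(\mu|\B_{\ell_n})$; hypothesis 2 then says $\mu$ has absolutely continuous conditional measures along the plaque partition $\Q_{n,j}:=\P_{\ell_n}(\underline{\omega}_{n,j},x_{n,j};r_{n,j})$ of $S_{n,j}:=S_{\ell_n}(\underline{\omega}_{n,j},x_{n,j};r_{n,j})$. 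Since a point of $\B_{\ell_n}$ lying in a half-ball is the centre of a plaque of the corresponding stack, $\mu$-almost every point lies in one of the countably many ``good'' charts $(S_m,\Q_m):=(S_{n,j},\Q_{n,j})$, so $\mu(\bigcup_m S_m)=1$ by hypothesis 1. By the classical construction of measurable partitions subordinate to the unstable foliation in the random setting (see \cite{LiQ95} and \cite{LY88}), there is a measurable partition $\xi$ subordinate to $W^{cu}$ w.r.t.\ $\mu$ which, modulo a $\mu$-null set, refines each $\Q_m$ on $S_m$; in particular every atom of $\xi$ is contained in a single plaque $W^u_{r_{\ell_n}}(\cdot)\subset W^{cu}(\cdot)$. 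Absolute continuity of conditional measures is preserved under refinement — restricting and renormalising a density on a leaf-plaque to a measurable sub-plaque keeps a density with respect to the induced Lebesgue measure — so $\mu$ has absolutely continuous conditional measures along $\xi$.

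Finally I would pass from $\xi$ to an arbitrary measurable partition $\eta$ subordinate to $W^{cu}$. The common refinement $\xi\vee\eta$ is again subordinate to $W^{cu}$, and absolute continuity along $\xi$ passes to it by the refinement remark above. Since the atoms of $\eta$ and of $\xi\vee\eta$ are pieces of the \emph{same} leaves of $W^{cu}$, the induced Lebesgue measures restrict compatibly; hence a set $A\subset\eta(\underline{\omega},x)$ that is Lebesgue-null is also Lebesgue-null on each sub-atom of $\xi\vee\eta$ inside $\eta(\underline{\omega},x)$, hence $\mu$-conditionally null there, and integrating over the quotient gives $\mu_{\eta(\underline{\omega},x)}(A)=0$. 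Thus $\mu_{\eta(\underline{\omega},x)}\ll{\rm Leb}_{\eta(\underline{\omega},x)}$ for $\mu$-a.e.\ $(\underline{\omega},x)$, for every subordinate $\eta$; together with the positivity of the central exponents this shows that $\mu$ is a random Gibbs $cu$-state. (Equivalently, one may invoke the equivalence between the definition used here and the requirement that $\mu$-a.e.\ point admit \emph{one} local unstable plaque with absolutely continuous conditional measure, which hypotheses 1 and 2 supply directly.)

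The main obstacle is the construction in the second step: distinct foliated charts $\Q_m$ can overlap along a common leaf, so the naive partition ``assign each point to its first chart'' need not have leaf-neighbourhoods as atoms, hence need not be subordinate to $W^{cu}$. Producing a subordinate partition that nonetheless refines the prescribed foliated charts — and checking, via the essential uniqueness of the Rokhlin disintegration, that its conditional measures are genuinely the normalised restrictions of the absolutely continuous conditionals of the $\Q_m$ — is the technical heart of the argument; the verification of positive exponents and the passage to a general subordinate partition are, by comparison, routine.
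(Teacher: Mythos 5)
You should first be aware that the paper does not actually prove Lemma \ref{eui}: it treats the lemma as an equivalent reformulation of Definition \ref{cud} and refers to \cite[Section 7.1]{AL} for the equivalence, so you are supplying an argument where the paper gives only a citation. Much of your outline is sound: the uniform backward contraction on the blocks plus Oseledets does give exponents $\ge\alpha$ along $E^{cu}$ $\mu$-a.e., the covering of ${\rm supp}(\mu|\B_{\ell_n})$ by finitely many half-balls does give $\mu\bigl(\bigcup_m S_m\bigr)=1$, and the final passage through $\xi\vee\eta$ and the transitivity/essential uniqueness of the Rokhlin disintegration is the right mechanism. One caveat: ``absolute continuity of conditional measures is preserved under refinement'' is false in general (refining an interval into points destroys it); it holds here only because both partitions are subordinate, so at a.e.\ point the atoms contain leaf-open neighbourhoods, the induced partition of each plaque or atom is essentially countable, and the refined conditionals are normalized restrictions. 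You invoke this principle twice without the countability justification, and it should be stated.

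The genuine gap is the pivot of your second step: the existence of a measurable partition $\xi$ subordinate to $W^{cu}$ that, mod $\mu$-null sets, refines every chart partition $\Q_m$ on $S_m$. This is not what the ``classical construction'' in \cite{LiQ95}, \cite{LY88} provides (those build a subordinate partition adapted to a single stack and to the dynamics, not one refining a prescribed countable family of overlapping charts), and the natural candidate $\bigvee_m\bigl(\Q_m\cup\{S_m^c\}\bigr)$ can fail to be subordinate, since a countable intersection of leaf-neighbourhoods of a point need not contain a leaf-neighbourhood; you yourself concede this construction is the unproven ``technical heart,'' so as written the proof is incomplete exactly at its central step. Fortunately the detour is unnecessary: fix an arbitrary subordinate partition $\eta$ and argue chart by chart. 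Disintegrate $\mu|_{S_m}$ over $\eta\vee\Q_m$ in two orders: first over $\Q_m$ and then over the trace of $\eta$ on each plaque (essentially countable because $\eta$ is subordinate), which shows these fine conditionals are absolutely continuous by hypothesis~2; and first over $\eta$, which identifies them, by essential uniqueness of the disintegration, with the conditionals of $\mu_{\eta(\underline{\omega},x)}|_{S_m}$. Hence any ${\rm Leb}$-null subset $A$ of an atom $\eta_{\underline{\omega}}(x)$ satisfies $\mu_{\eta(\underline{\omega},x)}(A\cap S_m)=0$ for every $m$, and since $\mu_{\eta(\underline{\omega},x)}\bigl(\bigcup_m S_m\bigr)=1$ for a.e.\ atom, $\mu_{\eta(\underline{\omega},x)}(A)=0$. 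This closes the gap without any global refining partition and is in the spirit of the equivalence the paper outsources to \cite[Section 7.1]{AL}.
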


Lemma \ref{eui} maybe seen as a definition of the random Gibbs $cu$-state, which is equivalent to Definition \ref{cud}, one can see a discussion on this equivalence in \cite[Section 7.1]{AL}.
It indicates that one can describe the absolute continuity of Gibbs $cu$-states restricted on stacks of random local unstable manifolds. 

To describe the absolute continuity of conditional measures along random unstable manifolds in local sense, it is convenient to introduce the notion of foliated chart.
 
\begin{definition}\label{Def:foliated-chart}
Let $\K$ be a compact metric space. A \emph{foliated chart} associated to $\K$ is a homeomorphism $\Phi:~\K \times \DD^{k}\rightarrow \B$ such that
\begin{itemize}

\item $\Phi_p=\Phi|\{p\}\times \DD^k$ is a diffeomorphism for each $p\in \K$.

\item $\Phi_p$ maps $\DD^k$ to disjoint (endowed random) local unstable disks with dimension $k$.

\item $p\mapsto \Phi_p$ depends continuously in $C^1$ topology.
\end{itemize}
\end{definition}

The result below demonstrates the existence of foliated charts associated to stack of random local unstable manifolds.

\begin{lemma}\label{foli}
Let $\ell\in\NN$ and $\mu$ an $\F$-invariant measure such that $\mu(\B_{\ell})>0$. Then there exists $\delta_{\ell}\le r_{\ell}$ such that for any $(\underline{\omega},x)\in {\rm supp}(\mu|{\B_{\ell}})$, for every $r\le \delta_{\ell}$, 
we have $\mu(S_{\ell}(\underline{\omega},x;r))>0$ and
there exists a foliated chart $\Phi: X\times \DD^{cu}\to S_{\ell}(\underline{\omega},x;r)$ for some compact subset $X$.
\end{lemma}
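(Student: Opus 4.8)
\textbf{Proof proposal for Lemma \ref{foli}.}

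The plan is to construct the foliated chart by first straightening the stack $\P_{\ell}(\underline{\omega},x;r)$ of random local unstable manifolds and then identifying the base with a suitable section transversal to this stack. The starting point is Proposition \ref{pu}, which gives, for each $(\underline{\omega}',x')\in U(\underline{\omega},x;r/2)\cap\B_{\ell}$, the embedded disk $W^u_{r_{\ell}}(\underline{\omega}',x')=\Theta(\underline{\omega}',x')(\DD^{cu})$ depending continuously in $C^1$ on the base point, with tangent spaces equal to $E^{cu}(\underline{\omega}',\cdot)$ and uniform backward contraction. First I would fix a reference point $(\underline{\omega},x)\in{\rm supp}(\mu|\B_{\ell})$; by continuity of $E^{cu}$ (Lemma \ref{fdpd}) I can choose $\delta_{\ell}\le r_{\ell}$ small enough that on $U(\underline{\omega},x;\delta_{\ell})$ the sub-bundle $E^{cu}(\underline{\omega}',\cdot)$ stays in a small cone around $E^{cu}(\underline{\omega},x)$, so that the local unstable disks meeting $U(\underline{\omega},x;r)$ are graphs over the ${\rm dim}E^{cu}$-plane $E^{cu}(\underline{\omega},x)$ in a fixed exponential chart at $x$. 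Shrinking further using the Lipschitz estimate of Lemma \ref{hee}, distinct disks from $\P_{\ell}(\underline{\omega},x;r)$ are pairwise disjoint inside $U(\underline{\omega},x;r)$ (this is already built into the definition of $\P_{\ell}$, but I need the graph property to get a transversal section hitting each disk exactly once).

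Next I would produce the base space $X$ and the map $\Phi$. Fix a $C^1$ disk $D$ through $x$ tangent to $E^{cs}(\underline{\omega},x)$ (a ${\rm dim}E^{cs}$-dimensional transversal); by the cone/graph property each element $W$ of $\P_{\ell}(\underline{\omega},x;r)$ meets $D$ in exactly one point, call it $\pi(W)$. Set $X:=\{\pi(W):W\in\P_{\ell}(\underline{\omega},x;r)\}\subset D$, which is compact because $\B_{\ell}$ is compact, $\Theta$ is continuous, and the intersection-with-$D$ map is continuous; thus $X$ is the continuous image of the compact set $\overline{U(\underline{\omega},x;r/2)\cap\B_{\ell}}$. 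Define $\Phi:X\times\DD^{cu}\to S_{\ell}(\underline{\omega},x;r)$ by $\Phi(p,\xi)=\Theta(\underline{\omega}_p,x_p)(\xi)$ where $(\underline{\omega}_p,x_p)$ is the point of $U(\underline{\omega},x;r/2)\cap\B_{\ell}$ whose unstable disk passes through $p$ (restricting to the part of that disk lying in $U(\underline{\omega},x;r)$, reparametrized onto $\DD^{cu}$). That $\Phi_p$ is a diffeomorphism onto its image is immediate from Proposition \ref{pu}; that $p\mapsto\Phi_p$ is continuous in $C^1$ follows from the continuity of $\Theta$ into ${\rm Emb^1}(\DD^{cu},M)$ together with continuity of the map $p\mapsto(\underline{\omega}_p,x_p)$, which in turn is the inverse of the homeomorphism $W\mapsto\pi(W)$; injectivity and surjectivity of $\Phi$ follow from disjointness of the disks and the definition of $S_{\ell}$. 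The positivity $\mu(S_{\ell}(\underline{\omega},x;r))>0$ holds because $(\underline{\omega},x)\in{\rm supp}(\mu|\B_{\ell})$ forces $\mu(U(\underline{\omega},x;r/2)\cap\B_{\ell})>0$, and every point of that set lies on its own local unstable manifold, hence in $S_{\ell}(\underline{\omega},x;r)$.

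The main obstacle I anticipate is showing that $p\mapsto(\underline{\omega}_p,x_p)$ is well defined and continuous, i.e. that the correspondence between transversal intersection points and base points in $\B_{\ell}$ is a genuine homeomorphism rather than merely a surjection. Two different points of $U(\underline{\omega},x;r/2)\cap\B_{\ell}$ can lie on the same local unstable disk, so I must pass to the quotient by the equivalence relation ``same disk'' and check this quotient is Hausdorff and compact — this uses the disjointness/graph property established above plus the fact that if two base points have overlapping local unstable disks then (after shrinking $r$) these disks coincide on $U(\underline{\omega},x;r)$, which in turn relies on uniqueness in the Plaque family theorem and the uniform size $r_{\ell}$ from Proposition \ref{pu}. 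Once this identification is in place, the continuity of $\Phi$ in the $C^1$ topology is a formal consequence of the continuity statement in Proposition \ref{pu}, and the remaining verifications are routine. I would also need to double-check that $\delta_{\ell}$ can be chosen uniformly over $(\underline{\omega},x)\in{\rm supp}(\mu|\B_{\ell})$, which follows from compactness of $\B_{\ell}$ and the uniformity of the constants $C_{\ell,\alpha}$, $r_{\ell,\alpha}$ and $L$ in Proposition \ref{pu} and Lemma \ref{hee}.
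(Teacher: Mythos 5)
Your overall strategy --- cut the stack $\P_{\ell}(\underline{\omega},x;r)$ with a transversal through $x$, take $X$ to be the set of intersection points, and build $\Phi$ from the continuous embedding map of Proposition \ref{pu} --- is the same as the paper's, and your argument for $\mu(S_{\ell}(\underline{\omega},x;r))>0$ is fine. However, there is a genuine gap in where you place the base: you take the transversal $D$, and hence $X$, inside $M$, while the elements of $\P_{\ell}(\underline{\omega},x;r)$ are disks in the product space $\Omega_{\varepsilon_0}\times M$, indexed in particular by the noise sequence. Disks attached to two different sequences $\underline{\omega}'\ne\underline{\omega}''$ are distinct elements of the stack (they lie in the different fibers $\{\underline{\omega}'\}\times M$ and $\{\underline{\omega}''\}\times M$), yet their projections to $M$ can perfectly well meet your disk $D$ at the same point $p$. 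Consequently the assignment $p\mapsto(\underline{\omega}_p,x_p)$ is not well defined, and the resulting $\Phi$ cannot be a homeomorphism onto $S_{\ell}(\underline{\omega},x;r)$: it cannot separate points $(\underline{\omega}',z')$ and $(\underline{\omega}'',z'')$ of $S_{\ell}$ lying over the same transversal point, and it does not even take values in the product space unless the $\underline{\omega}$-coordinate is recorded in the base. The ``pairwise disjointness built into the definition of $\P_{\ell}$'' that you invoke is disjointness in $\Omega_{\varepsilon_0}\times M$, not disjointness of the projections to $M$, so it does not rescue the construction; and the quotient by the relation ``same disk'' that you propose only addresses the within-fiber multiplicity (two base points in the same fiber sharing a plaque), not this cross-fiber collision.

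The fix is exactly what the paper does: take the transversal in the product space, $\Gamma=\Omega_{\varepsilon_0}\times\gamma$ with $\gamma\subset M$ a smooth compact disk through $x$ transverse to the unstable plaques, let $\tau(\underline{\omega}',x')$ be the unique point of $W^u_{r_{\ell}}(\underline{\omega}',x')\cap\Gamma$ (so the base point retains the coordinate $\underline{\omega}'$), and set $X=\tau\bigl(U(\underline{\omega},x;r/2)\cap\B_{\ell}\bigr)\subset\Gamma$. With this change your remaining steps go through essentially as you wrote them: compactness of $X$ follows from a sequence argument using compactness of $\B_{\ell}$ and the $C^1$ continuity from Proposition \ref{pu}, and $p\mapsto\Phi_p$ is continuous for the same reason. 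The well-definedness point you flag --- that two base points with the same $\tau$-value determine the same plaque inside $U(\underline{\omega},x;r)$ --- is indeed needed (within a fixed fiber) and is handled, as you suggest, by uniqueness of local unstable manifolds over $\B_{\ell}$; the paper passes over this silently.
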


\begin{proof}
Let us fix $(\underline{\omega},x)\in {\rm supp}(\mu|{\B_{\ell}})$.
It follows from Proposition \ref{pu} that for any $0<r \ll r_{\ell}$, the map
\begin{align*}
h_{r}: U(\underline{\omega},x; r/2)\cap \B_{\ell} & ~~\longrightarrow ~~\T^{\ZZ}\times M \\
(\underline{\omega'}~,~x') &~~\longmapsto ~~W^u_{r_{\ell}}(\underline{\omega}',x')\cap U(\underline{\omega},x;r)
\end{align*}
is continuous in $C^1$-topology. Using this continuity one can choose $\delta_{\ell}\ll r_{\ell}$, and take a smooth compact disk $\gamma$ containing $x$ with following property: for every $r \le\delta_0$, if we take $\Gamma=\Omega_{\varepsilon_0}\times \gamma$, then for any $(\underline{\omega'},x')\in U(\underline{\omega},x; r/2)\cap \B_{\ell}$, $W^u_{r_{\ell}}(\underline{\omega'},x')$ is transverse to $\Gamma$ at a single point, denoted by $\tau(\underline{\omega'},x')$.
Writing 
$$
X:=\bigcup_{(\underline{\omega'},x')\in U(\underline{\omega},x; r/2)\cap \B_{\ell}}\tau(\underline{\omega'},x').
$$ 

\begin{claim} 
$X$ is a compact subset of $\Omega_{\varepsilon_0}\times M$.
\end{claim}
\begin{proof}
Let $(\underline{\omega_n},x_n)\in X$ be a sequence of points that converges to $(\underline{\omega'},x')\in \Omega_{\varepsilon_0}\times M$ as $n\to +\infty$. By the compactness of $\Gamma$, we have $(\underline{\omega'},x')\in \Gamma$. 
By definition of $\tau$, for every $n\in \NN$ we may choose $(\underline{\omega}_n,\widehat{x}_n)\in U(\underline{\omega},x; r/2)\cap \B_{\ell}$ such that
$
\tau(\underline{\omega}_n,\widehat{x}_n)=(\underline{\omega}_n,x_n).
$ 
Up to considering subsequence, we may assume that $(\underline{\omega}_n,\widehat{x}_n)$ converges to a point $(\underline{\omega'},\widehat{x})\in U(\underline{\omega},x; r/2)\cap \B_{\ell}$. By the continuity of $h_r$, we know 
$(\underline{\omega}_n,x_n)$ converges to a point in $W^u_{r_{\ell}}(\underline{\omega}',\widehat{x})\cap U(\underline{\omega},x;\varepsilon)$. As $(\underline{\omega}_n,x_n)\to(\underline{\omega'},x')$, we have $(\underline{\omega'},x')\in W^u_{r_{\ell}}(\underline{\omega}',\widehat{x})\cap U(\underline{\omega},x;r)$. 
To conclude, we have shown that there exists $(\underline{\omega'},\widehat{x})\in U(\underline{\omega},x; r/2)\cap \B_{\ell}$ such that $(\underline{\omega}_n, x_n)\to \tau(\underline{\omega'},\widehat{x})$. This gives the compactness of $X$. 
\end{proof}

By the continuity of $h_{r}$, one can construct the continuous map $\phi: \Gamma\to {\rm Emb}^1(\DD^{cu},\Omega_{\varepsilon_0}\times M)$ defined by
$$
\phi(\underline{\omega}',y)(\DD^{cu})=W^u_{r_{\ell}}(\underline{\omega}',\widehat{y})\cap U(\underline{\omega},x;r),
$$
where $(\underline{\omega}',\widehat{y})$ is a point of $U(\underline{\omega},x; r/2)\cap \B_{\ell}$ satisfying $\tau(\underline{\omega}',\widehat{y})=(\underline{\omega}', y)$. Define
\begin{align*}
\Phi: \Gamma\times \DD^{cu} & ~~\longrightarrow ~~S_{\ell}(\underline{\omega},x;r) \\
(\underline{\omega'}~,~y,~z) &~~\longmapsto ~~\phi(\underline{\omega}',y)(z)
\end{align*}
According to the definition of $\phi$, one knows that $\Phi$ is a foliated chart.
\end{proof}

\medskip
Given an $\F$-invariant measure $\mu$ such that $\mu(S_{\ell}(\underline{\omega},x))>0$. If $\Phi: X\times \DD^{cu}\to S_{\ell}(\underline{\omega},x)$ is a foliated chart, then we know $\mu$
has absolutely continuous conditional measures on $\mathcal{P}_{\ell}(\underline{\omega},x)$ iff for the pullback $\nu:=\mu \circ \Phi$, there exists a measurable function $\rho:X\times \DD^{cu}\longrightarrow [0,\infty)$ such that
$$
\nu(A)=\int_A \rho(x,y)d{\rm Leb}_{\DD^{cu}}(y)d\hat{\nu}(x)
$$
for every measurable subset $A$ of $X \times \DD^{cu}$, where $\hat{\nu}$ is the quotient measure of $\nu$ defined by $\hat{\nu}(\xi)=\nu(\xi\times \DD^{cu})$ for every measurable $\xi \subset X$.

\medskip
We will use the following argument (see e.g. \cite[Proposition 7.3]{v1}).
\begin{lemma}\label{vp} 
Let $\mu$ be an $\F$-invariant measure such that $\mu(S_{\ell}(\underline{\omega},x;\varepsilon))>0$ for some $\varepsilon>0$, assume that $\Phi: X\times \DD^{cu}\to S_{\ell}(\underline{\omega},x;\varepsilon)$ is a foliated chart. Then for every $C>0$, there exists $C'=C'(C,\Phi)>0$ with following property: 

If for every open subset $\zeta\subset X$ satisfying $\hat{\nu}(\partial{\zeta})=0$ and open subset $\eta \subset \DD^{cu}$, one has
$$
\nu(\zeta\times \eta)\le C\cdot {\rm Leb}_{\DD^{cu}}(\eta)\hat{\nu}(\zeta),
$$
where $\nu:=\mu \circ \Phi$,
then $\mu$ has absolutely continuous conditional measures along $\P_{\ell}(\underline{\omega},x;\varepsilon)$ with density bounded from above by $C'$.
\end{lemma}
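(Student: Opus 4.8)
The plan is to deduce Lemma~\ref{vp} from the Rokhlin disintegration together with an approximation argument that turns the integral estimate $\nu(\zeta\times\eta)\le C\,{\rm Leb}_{\DD^{cu}}(\eta)\,\hat\nu(\zeta)$ into a pointwise bound on the densities of the conditional measures of $\nu$ along the vertical fibers $\{x\}\times\DD^{cu}$. Since $\Phi$ is a foliated chart, it is in particular a homeomorphism that is $C^1$ along fibers and depends continuously in $C^1$ on the base point, so the pushforward under $\Phi$ of ${\rm Leb}_{\DD^{cu}}$ on a fiber $\{x\}\times\DD^{cu}$ is comparable, with constants depending only on $\Phi$ (via the Jacobians of $\Phi_x$, which are uniformly bounded above and below by compactness of $X$ and continuity of $x\mapsto\Phi_x$), to ${\rm Leb}_{\gamma_x}$ on the corresponding random local unstable disk. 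Hence it suffices to prove the absolute continuity statement for $\nu$ itself on $X\times\DD^{cu}$ with a density bounded by some $C$ depending only on the hypothesis, and then set $C'=C'(C,\Phi)$ by absorbing these Jacobian distortion constants.

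First I would apply Rokhlin's disintegration theorem to $\nu$ with respect to the measurable partition of $X\times\DD^{cu}$ into vertical fibers $\{x\}\times\DD^{cu}$, obtaining conditional measures $\{\nu_x\}$ and the quotient $\hat\nu$ with $\nu(B)=\int \nu_x(B)\,d\hat\nu(x)$. The goal is to show $\nu_x\ll {\rm Leb}_{\DD^{cu}}$ with density $\le C$ for $\hat\nu$-a.e.\ $x$. Fix an open $\eta\subset\DD^{cu}$. The set function $\zeta\mapsto\nu(\zeta\times\eta)$ is a (finite, nonnegative) measure on $X$ that, by hypothesis applied to all open $\zeta$ with $\hat\nu(\partial\zeta)=0$, satisfies $\nu(\zeta\times\eta)\le C\,{\rm Leb}_{\DD^{cu}}(\eta)\,\hat\nu(\zeta)$; since such $\zeta$ generate (after a standard monotone-class / outer-regularity argument, using that $\hat\nu$ is inner regular and that sets with $\hat\nu$-null boundary form a sufficiently rich class) this inequality extends to all Borel $\zeta$, so the measure $\zeta\mapsto\nu(\zeta\times\eta)$ is absolutely continuous with respect to $\hat\nu$ with Radon--Nikodym derivative $\le C\,{\rm Leb}_{\DD^{cu}}(\eta)$. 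On the other hand, by the disintegration, $\nu(\zeta\times\eta)=\int_\zeta\nu_x(\eta)\,d\hat\nu(x)$, so the Radon--Nikodym derivative equals $\nu_x(\eta)$ for $\hat\nu$-a.e.\ $x$. Therefore $\nu_x(\eta)\le C\,{\rm Leb}_{\DD^{cu}}(\eta)$ for $\hat\nu$-a.e.\ $x$, for each fixed open $\eta$.

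Next I would remove the dependence of the null set on $\eta$: take a countable basis $\{\eta_j\}$ of open subsets of $\DD^{cu}$ stable under finite unions, get a single full-$\hat\nu$-measure set of $x$ on which $\nu_x(\eta_j)\le C\,{\rm Leb}_{\DD^{cu}}(\eta_j)$ for all $j$ simultaneously, and then extend to all open sets by monotone approximation and to all Borel sets by outer regularity of both $\nu_x$ and ${\rm Leb}_{\DD^{cu}}$. This yields $\nu_x\le C\,{\rm Leb}_{\DD^{cu}}$ as measures, hence $\nu_x\ll{\rm Leb}_{\DD^{cu}}$ with density bounded by $C$, for $\hat\nu$-a.e.\ $x$. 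Pushing forward by $\Phi_x$ and using the uniform Jacobian bounds discussed above converts this into: $\mu$ restricted to $S_\ell(\underline\omega,x;\varepsilon)$ has conditional measures along the plaques of $\P_\ell(\underline\omega,x;\varepsilon)$ absolutely continuous with respect to the induced Lebesgue measures, with density bounded by $C'=C'(C,\Phi)$.

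**The main obstacle** I anticipate is the careful handling of the class of test sets $\zeta$: the hypothesis is only assumed for open $\zeta$ with $\hat\nu(\partial\zeta)=0$, so one must verify that this class is rich enough (e.g.\ that every open set is a monotone limit, up to $\hat\nu$-null sets, of such sets, which holds because for each point only countably many ``concentric'' boundaries can carry positive $\hat\nu$-mass) to force the Radon--Nikodym inequality on all Borel sets. The rest is a routine application of disintegration and regularity, plus bookkeeping of the bounded distortion constants coming from the $C^1$ continuity built into the definition of a foliated chart; none of that should present a genuine difficulty.
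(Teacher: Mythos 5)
Your argument is correct. Note that the paper itself does not prove Lemma \ref{vp}: it is quoted with a pointer to \cite[Proposition 7.3]{v1}, so there is no in-paper proof to match. Your route --- disintegrate $\nu$ over the vertical fibers, fix an open $\eta$, extend the hypothesis from open $\zeta$ with $\hat{\nu}$-null boundary to all Borel $\zeta$ (using that only countably many ``concentric'' boundaries can be charged, plus continuity from below and outer regularity), identify the Radon--Nikodym derivative of $\zeta\mapsto\nu(\zeta\times\eta)$ with $x\mapsto\nu_x(\eta)$, and then run a countable basis of $\eta$'s to get $\nu_x\le C\,{\rm Leb}_{\DD^{cu}}$ almost everywhere --- is a complete and correct rendering of the cited result; the version in Viana's notes reaches the same fiberwise bound via conditional expectations along an increasing sequence of finite partitions of the base with null-boundary elements (a martingale-type approximation), which is the same idea in different packaging. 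Your final step, transporting the bound through $\Phi$ and absorbing the uniformly bounded fiber Jacobians (compactness of $X\times\DD^{cu}$ and $C^1$-continuity of $p\mapsto\Phi_p$ give $\mathrm{Jac}\,\Phi_p$ bounded above and away from zero) into $C'=C'(C,\Phi)$, correctly accounts for the passage from ${\rm Leb}_{\DD^{cu}}$ to ${\rm Leb}_\gamma$ on the plaques of $\P_{\ell}(\underline{\omega},x;\varepsilon)$, with the identification of conditional measures justified by uniqueness in Rokhlin's theorem since $\Phi$ is a homeomorphism carrying $\nu$ to $\mu|S_{\ell}(\underline{\omega},x;\varepsilon)$ and the vertical partition to $\P_{\ell}(\underline{\omega},x;\varepsilon)$.
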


\subsection{Proof of Theorem \ref{gibbse}}\label{se}
Let $\mu_n$ be a sequence of stationary measures for which $\mu_n\to \mu$ as $n\to +\infty$. Lemma \ref{G} implies that $\mu_n^*\to \mu^*$ as $n\to +\infty$.
By assumption and compactness of hyperbolic bolcks, for any $m\in \NN$, there exists $\ell_m\in \NN$ such that 
\begin{equation}\label{est1}
\mu^*(\B_{\ell_{m}}(\alpha))\ge \limsup_{n\to+\infty}\mu^*_n(\B_{\ell_m}(\alpha))>1-\frac{1}{m}.
\end{equation}
Thus, we have 
$$
\mu^*\left(\bigcup_{m\ge 1}\B_{\ell_m}(\alpha)\right)=1.
$$
By Lemma \ref{eui}, it suffices to verify that $\B_{\ell_m}(\alpha)$ satisfies Condition \ref{t} of Lemma \ref{eui}.

Fixing integers $m$ and $\ell_m$ as in (\ref{est1}).  Up to taking subsequences, we may assume that   
$$
\mu_n(\B_{\ell_m}(\alpha))>1-\frac{1}{m}.
$$
holds for every $n\in \NN$.
Choose any $(\underline{\omega},x)\in {\rm supp}(\mu|\B_{\ell_n}(\alpha))$, and then take $r$ small enough and satisfies
\begin{equation}\label{se}
\mu\left(\partial(S_{\ell_m}(\underline{\omega},x;r))\right)=0,
\end{equation}
recall that $S_{\ell_m}(\underline{\omega},x;r)$ is the union of stack of random local unstable manifolds with uniform size and backward contracting rate.

It follows from Lemma \ref{rGibbs} that any $\mu^*_n$ is a random SRB measure. According to the definition of hyperbolic blocks, we have that $\mu_n^*$-almost every point of $\B_{\ell_m}(\alpha)$ has only negative Lyapunov exponents along $E^{cs}$ and positive Lyapunov exponents along $E^{cu}$. Denote by $\widehat{\rho}_n$ be the density of the conditional measures of $\mu_n$ along $\P_{\ell_m}(\underline{\omega},x;r)$, put $J^{cu}(\underline{\omega'},y)=|{\rm det}Df_{\underline{\omega'}}|_{E^{cu}(\underline{\omega'},y)}|$. Then, we have \cite[ Chapter VI: Proposition 2.2 and Corollary 8.1]{LiQ95}
$$
\frac{\widehat{\rho}_n(\underline{\omega'},y)}{\widehat{\rho}_n(\underline{\omega'},z)}=\prod_{k=1}^{\infty}\frac{J^{cu}(\F^{-k}(\underline{\omega'},z))}
{J^{cu}(\F^{-k}(\underline{\omega'},y))}
$$
for $\mu_{\{\underline{\omega'}\}\times \gamma}$-almost every $(\underline{\omega'},y)$ and $(\underline{\omega'},z)$ in $\{\underline{\omega'}\}\times \gamma\in \P_{\ell_m}(\underline{\omega},x;r)$. Using Lemma \ref{hee}, one gets that there exists $C_0$ independent of $n$ such that for every $n\in \NN$,
$$
\frac{1}{C_0}\le \widehat{\rho}_n(\underline{\omega'},y)\le C_0
$$
for $\mu_{\{\underline{\omega'}\}\times \gamma}$-almost every $(\underline{\omega'},y)$ in $\{\underline{\omega'}\}\times \gamma\in \P_{\ell_m}(\underline{\omega},x;r).$
By Lemma \ref{foli}, there exists a foliated chart
$
\Phi: X\times \DD^{cu}\to S_{\ell_m}(\underline{\omega},x;r).
$
Consequently, there is a constant $C_1>0$ such that for any $n\in \NN$, for measurable subsets $\zeta\subset X$ and $\eta\subset \DD^{cu}$, 
\begin{equation}\label{fdcd}
\nu_n(\zeta \times \eta)\le C_1 \cdot {\rm Leb}_{\DD^{cu}}(\eta) \hat{\nu}_n(\zeta)\quad \textrm{for every}~n\in \NN,
\end{equation}
where $\nu_n=\mu_n^* \circ \Phi $ for every $n\in \NN$. Define $\nu=\mu^* \circ \Phi $, then the convergence $\mu_n^*\to \mu^*$ yields $\nu_n \to \nu$ directly. By assumption (\ref{se}), we have $\hat{\nu}_n\to \hat{\nu}$ as $n\to \infty$. Take any open subset $\zeta \subset \DD^{cu}$ and open subset $\zeta \subset X$ with $\hat{\nu}(\partial{\zeta})=0$.  Applying (\ref{fdcd}), one gets
\begin{eqnarray*}
\nu(\zeta \times \eta) &\le & \liminf_{n\to \infty}\nu_n(\zeta \times \eta)\\
&\le & C\cdot{\rm Leb}_{\DD^{cu}}(\eta)\liminf_{n\to \infty}  \hat{\nu}_n(\zeta)\\
&=& C\cdot{\rm Leb}_{\DD^{cu}}(\eta)\hat{\nu}(\zeta).
\end{eqnarray*}
By Lemma \ref{vp}, one knows that $\mu$ has absolutely continuous conditional measures along $\P_{\ell_m}(\underline{\omega},x;r)$.

\section{Proof of the Theorem \ref{TheoA}}
Let $f$ be a $C^2$ diffeomorphism exhibiting a dominated splitting $TM=E^{cu}\oplus_{\succ} E^{cs}$. Let us fix the noise level $\varepsilon_0$ given by Lemma \ref{fdpd}.
Given $\ell\in \NN$ and $\alpha>0$, let
$$
K_{\ell}^{u}(\alpha)=\Big\{(\underline{\omega},x)\in \Omega_{\varepsilon_0}\times M: \prod_{i=0}^{n-1}\|Df^{\ell}_{\underline{\omega}}|_{E^{cs}(\underline{\omega},x)}\|\le {\rm e}^{-\alpha \ell}, \forall n\ge 1 \Big\};
$$

$$
K_{\ell}^{s}(\alpha)=\Big\{(\underline{\omega},x)\in \Omega_{\varepsilon_0}\times M: \prod_{i=0}^{n-1}\|Df^{-\ell}_{\underline{\omega}}|_{E^{cu}(\underline{\omega},x)}\|\le {\rm e}^{-\alpha \ell}, \forall n\ge 1 \Big\}.
$$
Then define
$$
K_{\ell}(\alpha)=K_{\ell}^{u}(\alpha)\cap K_{\ell}^{s}(\alpha).
$$

\begin{lemma}\label{ex}
Given $0<\alpha<\beta$, if $\{\mu_n\}$ is a sequence of $\F$-invariant measures that converges to $\mu$ such that
$$
\lim_{n\to+\infty}\frac{1}{n}\log \|Df_{\underline{\omega}}^{-n}|_{E^{cu}(\underline{\omega},x)}\|<-\beta,
\quad
\lim_{n\to+\infty}\frac{1}{n}\log \|Df_{\underline{\omega}}^{n}|_{E^{cs}(\underline{\omega},x)}\|<-\beta
$$
for $\mu$-almost every $(\underline{\omega},x)\in \Omega_{\varepsilon_0}\times M$,
then for any $\eta\in (0,1)$ there exists $\ell(\eta)\in \NN$ such that for any $\ell\ge \ell(\eta)$, we have
$$
\liminf_{n\to +\infty}\mu_n\left(K_{\ell}(\beta)\right)>1-\eta.
$$
\end{lemma}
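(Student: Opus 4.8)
The plan is to exploit the uniform convergence of time-averages guaranteed by the hypothesis together with a standard maximal-function/Egorov argument, and then upgrade pointwise control to a lower bound on the measure of the hyperbolic-type set $K_\ell(\beta)$, using upper semicontinuity of $\liminf_n \mu_n$ on closed sets. First I would introduce, for fixed $\ell$, the function $\psi^{cs}_\ell(\underline\omega,x) = \frac{1}{\ell}\log\|Df^\ell_{\underline\omega}|_{E^{cs}(\underline\omega,x)}\|$ and its counterpart $\psi^{cu}_\ell(\underline\omega,x) = \frac{1}{\ell}\log\|Df^{-\ell}_{\underline\omega}|_{E^{cu}(\underline\omega,x)}\|$; both are continuous on $\Omega_{\varepsilon_0}\times M$ by Lemma \ref{fdpd} (continuity of the random splitting) and continuity of $T$. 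The defining inequalities of $K_\ell^u(\beta)$ and $K_\ell^s(\beta)$ are precisely the statement that the Birkhoff sums of $\psi^{cs}_\ell$ (resp. $\psi^{cu}_\ell$) along the $\ell$-step dynamics $\F^\ell$ stay below $-\beta\ell n$ — i.e. the ``never-leave'' sets for a Birkhoff-sum condition. Note here the subtlety that the exponents $-\beta$ are strictly larger than the rate $-\alpha$ used elsewhere, giving the slack needed to absorb errors.

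Next, fix $\eta\in(0,1)$. By the hypothesis, for $\mu$-a.e. $(\underline\omega,x)$ we have $\limsup_n \frac{1}{n}\sum_{i=0}^{n-1}\psi^{cs}_{\ell_0}(\F^{i\ell_0}(\underline\omega,x)) < -\beta$ for a suitable reference scale; more robustly, I would instead argue directly that $\int \psi^{cs}_\ell \, d\mu \to$ a quantity $< -\beta$ as $\ell\to\infty$ along $\F^\ell$-ergodic decomposition, or simply pass to a large $\ell$ for which the hypothesis forces $\int \psi^{cs}_\ell\,d\mu < -\beta - \tau$ and $\int \psi^{cu}_\ell\,d\mu < -\beta-\tau$ for some $\tau>0$ (this uses that the a.e. limit of the normalized logarithms equals the integral of $\psi_\ell$ in the limit, by subadditivity/Kingman applied to the norm cocycles, combined with the domination bound). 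Having chosen $\ell(\eta)$ so large that this integral gap holds for all $\ell\ge\ell(\eta)$, I would then invoke a Pliss-type / maximal-ergodic-lemma argument: for an $\F^\ell$-invariant measure $\nu$ with $\int\psi_\ell\,d\nu < -\beta$, the measure of the set of points whose \emph{every} forward Birkhoff average of $\psi_\ell$ stays below $-\beta$ is bounded below by a definite amount depending only on $\int\psi_\ell\,d\nu$, $\beta$, and $\sup|\psi_\ell|$. The obstruction is that $\mu_n$ need not be $\F^\ell$-invariant nor have the integral gap; so the cleaner route is to establish the bound for $\mu$ itself first — using its $\F$-invariance and the hypothesis — obtaining $\mu(K_\ell(\beta))> 1-\eta/2$ (after possibly shrinking slack from $\beta+\tau$ down to $\beta$ to make $K_\ell(\beta)$ have large measure), and then transfer to $\mu_n$ by approximation.

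For the transfer, I would show $K_\ell(\beta)$ is closed (it is an intersection over $n$ of preimages of closed half-lines under continuous maps, hence closed), and more importantly that a slightly larger \emph{open} neighborhood — say the set $K_\ell(\beta-\tau')$ defined with the weaker rate $\beta-\tau'$ but only finitely many conditions $n\le N$ — contains $K_\ell(\beta)$, is open, and that $\mu$ of it is close to $\mu(K_\ell(\beta))$. Applying the portmanteau theorem to $\mu_n\to\mu$ on this open set yields $\liminf_n \mu_n(\text{open set}) \ge \mu(\text{open set}) > 1-\eta$; but since the finitely-many-conditions set with weaker rate is only a truncation, I must separately control the tail $n>N$ uniformly in $n$. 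That last uniformity is the main obstacle: for general $\mu_n$ (not assumed $\F$-invariant, only convergent) the tail cannot be controlled pointwise, so the correct statement probably requires $\mu_n$ to be $\F$-invariant, which is how it is used in the main text — and then Poincaré recurrence plus the integral gap (which passes to $\mu_n$ for $n$ large by $\int\psi_\ell\,d\mu_n\to\int\psi_\ell\,d\mu$, $\psi_\ell$ being continuous) lets one run the Pliss/maximal-ergodic argument directly on each $\mu_n$, giving $\mu_n(K_\ell(\beta))>1-\eta$ for all large $n$ and hence the claimed $\liminf$. I expect the bookkeeping to reconcile the three rates $\alpha<\beta$ and the auxiliary slack $\tau$, and the verification that the Pliss constant depends only on $(\beta,\int\psi_\ell, \sup|\psi_\ell|)$ and not on the individual measure, to be the only genuinely delicate points.
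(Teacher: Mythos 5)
Your reading of $K_{\ell}(\beta)$ is not the one the paper uses, and this changes the nature of the lemma entirely. In the paper the factors in the products defining $K_{\ell}^{u}(\beta)$ and $K_{\ell}^{s}(\beta)$ are \emph{not} shifted along the orbit (contrast with $\B_{\ell}(\alpha)$, whose factors are evaluated at $\F^{i\ell}(\underline{\omega},x)$), so the condition ``for all $n\ge 1$'' collapses to the single-time conditions $\|Df^{\ell}_{\underline{\omega}}|_{E^{cs}(\underline{\omega},x)}\|\le e^{-\beta\ell}$ and $\|Df^{-\ell}_{\underline{\omega}}|_{E^{cu}(\underline{\omega},x)}\|\le e^{-\beta\ell}$. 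Consequently the paper's proof is very short: the open set $U_{\ell}=\{(\underline{\omega},x):\frac{1}{\ell}\log\|Df^{-\ell}_{\underline{\omega}}|_{E^{cu}(\underline{\omega},x)}\|<-\beta\}$ has $\mu(U_{\ell})>1-\eta/2$ for every $\ell\ge\ell(\eta)$ by the almost-everywhere hypothesis, it is contained in the corresponding single-time set $K_{\ell}^{\cdot}(\beta)$, and the portmanteau inequality for $\mu_n\to\mu$ gives $\liminf_n\mu_n(K_{\ell}^{\cdot}(\beta))\ge\liminf_n\mu_n(U_{\ell})\ge\mu(U_{\ell})$; the same is done for the other bundle and the two estimates are intersected. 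The passage from single-time control at rate $\beta$ to the all-$n$ (``never-leave'') condition is deliberately postponed to Theorem \ref{uni2}, where the Pliss-type lemma is invoked and where the slack $\alpha<\beta$ enters. Your proposal treats $K_{\ell}(\beta)$ as the never-leave set and therefore attacks (a harder version of) Theorem \ref{uni2} rather than Lemma \ref{ex}; the simple portmanteau-on-open-sets argument, which is the whole content of the lemma, appears in your sketch only as one ingredient buried in a much heavier scheme.

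Moreover, taken on its own terms the sketch does not close. An integral gap $\int\psi_{\ell}\,d\mu_n<-\beta-\tau$ fed into a maximal-ergodic/Pliss argument yields only a definite positive lower bound on the measure of the never-leave set at rate $\beta$, not a bound of the form $1-\eta$. To get measure close to $1$ one needs the \emph{single-time} set at a strictly better rate $\beta+c$ to carry measure close to $1$, and the resulting bound is of order $1-\delta(1+G/c)$, with $\delta$ the measure of the bad single-time set and $G$ a uniform bound on $\psi_{\ell}+\beta$. Under the hypothesis of the lemma the Lyapunov limits are only $<-\beta$ pointwise, with no uniform gap, so $c$ must shrink as $\eta$ does while nothing guarantees $\delta=o(c)$; this is precisely why the paper never claims all-time control at rate $\beta$, but only at the weaker rate $\alpha<\beta$ in $\B_{\ell}(\alpha)$. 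The ``bookkeeping reconciling the three rates'' that you defer is therefore the crux, not a routine check. (Two minor points: the lemma does assume the $\mu_n$ are $\F$-invariant, so your worry on that score is moot; and $\mu$, being a weak* limit of $\F$-invariant measures, is itself $\F$-invariant, so Kingman's theorem applies to it directly.)
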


\begin{proof}
Since 
$$
\lim_{n\to+\infty}\frac{1}{n}\log \|Df_{\underline{\omega}}^{-n}|_{E^{cu}(\underline{\omega},x)}\|<-\beta
$$
holds on a subset with full $\mu$-measure, for any $\eta\in (0,1)$ there exists $\ell(\eta)\in \NN$ such that 
for any $\ell\ge \ell(\eta)$, we have
$$
\mu\left(U_{\ell}\right)>1-\eta/2, 
$$
where
$$
U_{\ell}:=\Big\{(\underline{\omega},x)\in \Omega\times M: \frac{1}{\ell}\log \|Df_{\underline{\omega}}^{-\ell}|_{E^{cu}(\underline{\omega},x)}\|<-\beta\Big\}.
$$
By definition, $U_{\ell}$ is open and contained in $K_{\ell}^{u}(\beta)$, thus the convergence $\mu_n\to \mu$ suggests that 
$$
\liminf_{n\to +\infty}\mu_n(K_{\ell}^{u}(\beta))\ge \liminf_{n\to +\infty}\mu_n(U_{\ell})>1-\eta/2.
$$
By the similar argument as above, we conclude that
$$
\liminf_{n\to +\infty}\mu_n(K_{\ell}^{s}(\beta))>1-\eta/2.
$$
Consequently, we have
$$
\liminf_{n\to +\infty}\mu_n(K_{\ell}(\beta))>1-\eta.
$$
\end{proof}

By using Pliss-like Lemma (see \cite[Lemma 5.8]{CMY18}, \cite[Lemma A]{AV17}) to Lemma \ref{ex}, we can obtain the following result. The proof is similar to \cite[Proposition 5.6]{CMY18}, hence we omit here.

\begin{theorem}\label{uni2}
Given $0<\alpha<\beta$, if $\{\mu_n\}$ is a sequence of $\F$-invariant measures that converges to $\mu$ such that
$$
\lim_{n\to +\infty}\frac{1}{n}\log \|Df_{\underline{\omega}}^n|_{E^{cs}(\underline{\omega},x)}\|<-\beta,\quad
\lim_{n\to +\infty}\frac{1}{n}\log \|Df_{\underline{\omega}}^{-n}|_{E^{cu}(\underline{\omega},x)}\|<-\beta
$$
for $\mu$-almost every $(\underline{\omega},x)\in \Omega_{\varepsilon_0}\times M$,
then we have 
$$
\lim_{\ell\to +\infty}\liminf_{n\to +\infty}\mu_n(\B_{\ell}(\alpha))=1.
$$
\end{theorem}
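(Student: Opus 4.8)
The plan is to bootstrap the one-step estimate of Lemma~\ref{ex} to the infinite-time estimate that defines $\B_\ell(\alpha)$, the main tool being the maximal ergodic theorem (the measure-theoretic form of the Pliss-type argument of \cite[Lemma~5.8]{CMY18}, \cite[Lemma~A]{AV17}). Write $\B_\ell(\alpha)=\B^s_\ell(\alpha)\cap\B^u_\ell(\alpha)$, where $\B^s_\ell(\alpha)$ collects the forward condition along $E^{cs}$ and $\B^u_\ell(\alpha)$ the backward condition along $E^{cu}$; I treat the two symmetrically and describe the $E^{cs}$ half. Set $\varphi(\underline{\omega},x):=\log\|Df^\ell_{\underline{\omega}}|_{E^{cs}(\underline{\omega},x)}\|$. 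Since $E^{cs}$ depends continuously on $(\underline{\omega},x)$ by Lemma~\ref{fdpd} and the $\ell$-step cocycle is continuous, $\varphi$ is continuous on the compact set $\Omega_{\varepsilon_0}\times M$, hence bounded, with $\varphi\le\bar C\ell$ for $\bar C:=\log\max_{\omega\in{\rm supp}(\theta_{\varepsilon_0})}\sup_M\|Df_\omega\|$, a constant independent of $\ell$. By the chain rule the product in the definition of $\B^s_\ell(\alpha)$ equals $\exp\!\big(\sum_{i=0}^{n-1}\varphi((\F^\ell)^i(\underline{\omega},x))\big)$, so, with $g:=\varphi+\alpha\ell$,
$$
E:=(\Omega_{\varepsilon_0}\times M)\setminus\B^s_\ell(\alpha)=\Big\{(\underline{\omega},x):\ \sup_{n\ge1}\ \sum_{i=0}^{n-1}g\big((\F^\ell)^i(\underline{\omega},x)\big)>0\Big\},
$$
which is open, being a countable union of sets on which a continuous Birkhoff sum is positive.

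Each $\mu_n$ is $\F$-invariant, hence $\F^\ell$-invariant, and $g\in L^1(\mu_n)$, so the maximal ergodic theorem applied to $(\F^\ell,\mu_n,g)$ yields $\int_E g\,d\mu_n\ge0$, that is $\int_E\varphi\,d\mu_n\ge-\alpha\ell\,\mu_n(E)$. Given $\eta\in(0,1)$, Lemma~\ref{ex} provides $\ell(\eta)$ with $\liminf_n\mu_n(K_\ell(\beta))>1-\eta$ for all $\ell\ge\ell(\eta)$; since $K_\ell(\beta)\subseteq\{\varphi\le-\beta\ell\}$ (take $n=1$ in its definition), writing $\eta_n:=1-\mu_n(K_\ell(\beta))$ we have $\limsup_n\eta_n\le\eta$ and $\mu_n(\{\varphi>-\beta\ell\})\le\eta_n$. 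Splitting $E$ over $\{\varphi\le-\beta\ell\}$ and its complement and using $\varphi\le\bar C\ell$ on the latter,
$$
-\alpha\ell\,\mu_n(E)\ \le\ \int_E\varphi\,d\mu_n\ \le\ -\beta\ell\big(\mu_n(E)-\eta_n\big)+\bar C\ell\,\eta_n ,
$$
so $(\beta-\alpha)\mu_n(E)\le(\beta+\bar C)\eta_n$, i.e. $\mu_n(E)\le\tfrac{\beta+\bar C}{\beta-\alpha}\eta_n$ for every $n$.

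Running the symmetric argument for $\B^u_\ell(\alpha)$ — replacing $\F^\ell$ by $\F^{-\ell}$ and $\varphi$ by $\log\|Df^{-\ell}_{\underline{\omega}}|_{E^{cu}(\underline{\omega},x)}\|$, and noting $K_\ell(\beta)$ also controls this function — gives, for the same $\ell(\eta)$ and with a constant $K>0$ independent of $\ell$ and of $n$ (coming only from the uniform bounds on $Df^{\pm1}_\omega$), the estimate $\mu_n\big((\Omega_{\varepsilon_0}\times M)\setminus\B_\ell(\alpha)\big)\le K\eta_n$ for all $n$ and all $\ell\ge\ell(\eta)$. Taking $\limsup_n$ gives $\liminf_n\mu_n(\B_\ell(\alpha))\ge1-K\eta$; since $\eta>0$ is arbitrary and $\ell(\eta)$ depends on $\eta$ alone, letting $\ell\to\infty$ yields $\lim_{\ell\to\infty}\liminf_n\mu_n(\B_\ell(\alpha))=1$.

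The one genuinely delicate point is the estimate in the second paragraph: a direct appeal to the quantitative Pliss lemma bounds $\mu_n(\B_\ell(\alpha))$ below only by a \emph{fixed} positive fraction, and the improvement to a lower bound tending to $1$ comes precisely from feeding the near-full-measure conclusion of Lemma~\ref{ex} into the maximal ergodic inequality together with the uniform upper bound $\varphi\le\bar C\ell$. The remaining verifications — that $\varphi$ and its $E^{cu}$-analogue are continuous with $\ell$-uniform bounds, that the exceptional sets are measurable, and that $\ell(\eta)$ depends on $\eta$ alone — are routine given Lemmas~\ref{fdpd} and~\ref{ex}.
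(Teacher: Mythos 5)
Your proof is correct, and it follows the same overall strategy the paper indicates --- bootstrap the finite-time, near-full-measure estimate of Lemma~\ref{ex} into the infinite-horizon estimate defining $\B_{\ell}(\alpha)$ --- but with a different technical engine. The paper omits the argument and appeals to the quantitative Pliss-type lemma of \cite[Lemma 5.8]{CMY18}, \cite[Lemma A]{AV17}, following the scheme of \cite[Proposition 5.6]{CMY18}: orbitwise counting of Pliss times for the $\ell$-step cocycle, then integration against $\mu_n$, with Lemma~\ref{ex} forcing the density of Pliss times (hence the measure of the block) close to $1$. You instead note that the defining products of $\B_{\ell}(\alpha)$ are exactly Birkhoff sums of $\varphi=\log\|Df^{\ell}_{\underline{\omega}}|_{E^{cs}(\underline{\omega},x)}\|$ for $\F^{\ell}$ (and of its $E^{cu}$-analogue for $\F^{-\ell}$), and apply Hopf's maximal ergodic inequality to $g=\varphi+\alpha\ell$, splitting the bad set via $K_{\ell}(\beta)\subset\{\varphi\le-\beta\ell\}$ and the uniform bound $\varphi\le\bar C\ell$; this yields the linear bound $\mu_n(E)\le\frac{\beta+\bar C}{\beta-\alpha}\,\eta_n$ with explicit constants independent of $\ell$ and $n$, which is precisely the integrated form of the Pliss argument. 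What your route buys is a self-contained proof (no external Pliss machinery, no exchange of orbitwise densities and spatial averages) with transparent constants; what the paper's route buys is reuse of lemmas already set up in \cite{CMY18,AV17}. Two cosmetic points: the identification of the products with Birkhoff sums is by definition of $\F^{\ell}$ and $Df_{\underline\omega}$-invariance of the bundles from Lemma~\ref{fdpd} (no chain rule is needed), and the maximal ergodic theorem is applied by passing through $E_N=\{\max_{1\le n\le N}S_n g>0\}$ and letting $N\to\infty$, using $g\in L^{1}(\mu_n)$; also, as in the paper, you implicitly (and legitimately) take the $\mu_n$ to be supported on the $\F$-invariant set $\Omega_{\varepsilon_0}\times M$ where the continuous invariant bundles are defined.
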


As a consequence of Theorem \ref{uni2}, we have

\begin{corollary}\label{uni}
Let $f$ be a diffeomorphism with a dominated splitting $TM=E^{cu}\oplus E^{cs}$. Given $\alpha>0$, if $\mu$ is a zero noise limit of a sequence of stationary measures $\mu_n$ satisfying
$$
\lim_{n\to +\infty}\frac{1}{n}\log \|Df^n|_{E^{cs}(x)}\|<-\beta,\quad
\lim_{n\to +\infty}\frac{1}{n}\log \|Df^{-n}|_{E^{cu}(x)}\|<-\beta
$$
holds for $\mu$-almost every $x\in M$,
then we have 
$$
\lim_{\ell\to +\infty}\liminf_{n\to +\infty}\mu_n^*(\B_{\ell}(\alpha))=1.
$$

\end{corollary}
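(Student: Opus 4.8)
The plan is to deduce this directly from Theorem \ref{uni2} by lifting the stationary measures $\mu_n$ on $M$ to $\F$-invariant measures on the skew product $\T^{\ZZ}\times M$. Since $\mu$ is a zero noise limit, by definition there are $\varepsilon_n\to 0$ and stationary measures $\mu_n$ w.r.t. $\theta_{\varepsilon_n}$ with $\mu_n\to\mu$; discarding finitely many terms we may assume $\varepsilon_n\le\varepsilon_0$, so each $\mu_n^*$ (the $\F$-invariant measure associated to $\mu_n$ by Lemma \ref{G}) is supported in $\Omega_{\varepsilon_n}\times M\subset\Omega_{\varepsilon_0}\times M$. By the Corollary following Lemma \ref{G}, $\mu_n^*\to\delta_{\omega_f}^{\ZZ}\times\mu=:\mu^*$, and $\mu^*$ is $\F$-invariant because every zero noise limit is $f$-invariant.

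Next I would verify the hypotheses of Theorem \ref{uni2} for the sequence $\{\mu_n^*\}$ with limit $\mu^*$. Because $\mu^*=\delta_{\omega_f}^{\ZZ}\times\mu$, for $\mu^*$-almost every $(\underline{\omega},x)$ the sequence $\underline{\omega}$ is the constant sequence $(\dots,\omega_f,\omega_f,\dots)$, hence $f_{\underline{\omega}}^n=f^n$ for every $n\in\ZZ$. Restricting the random dominated splitting of Lemma \ref{fdpd} to this fixed sequence yields a genuinely $Df$-invariant dominated splitting of $TM$ of the same index as $E^{cu}\oplus E^{cs}$, so by uniqueness of dominated splittings of a prescribed index it coincides with $E^{cu}(x)\oplus E^{cs}(x)$ (this is the identification already underlying Lemma \ref{gibbsp}). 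Consequently, for $\mu^*$-almost every $(\underline{\omega},x)$,
$$
\lim_{n\to+\infty}\frac1n\log\|Df_{\underline{\omega}}^n|_{E^{cs}(\underline{\omega},x)}\|=\lim_{n\to+\infty}\frac1n\log\|Df^n|_{E^{cs}(x)}\|<-\beta ,
$$
and the analogous identity holds for the $E^{cu}$ term with backward iterates; these are exactly the inequalities imposed on $\mu$ in the statement, which hold for $\mu$-almost every $x$ and therefore for $\mu^*$-almost every $(\underline{\omega},x)$. (The limits exist $\mu^*$-a.e. by Kingman's subadditive ergodic theorem applied to the subadditive cocycles $\log\|Df_{\underline{\omega}}^n|_{E^{cs}}\|$ and $\log\|Df_{\underline{\omega}}^{-n}|_{E^{cu}}\|$.)

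With the hypotheses in place, Theorem \ref{uni2} applied to $\{\mu_n^*\}$ and $\mu^*$ gives exactly $\lim_{\ell\to+\infty}\liminf_{n\to+\infty}\mu_n^*(\B_{\ell}(\alpha))=1$, which is the desired conclusion. The only step that is not pure bookkeeping is the identification of the random splitting $E^{cu}(\omega_f^{\ZZ},\cdot)\oplus E^{cs}(\omega_f^{\ZZ},\cdot)$ along the fixed sequence with the deterministic splitting $E^{cu}\oplus E^{cs}$; I expect this to be the main (though standard) point, and it rests on uniqueness of dominated splittings of a fixed index together with the continuity and $\F$-invariance provided by Lemma \ref{fdpd}. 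Everything else — the lift $\mu_n\mapsto\mu_n^*$, the convergence $\mu_n^*\to\mu^*$, and the existence of the limiting exponents — is supplied by the results recorded above.
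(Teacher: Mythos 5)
Your proposal is correct and follows essentially the same route as the paper: lift the stationary measures to $\F$-invariant measures $\mu_n^*$ via Lemma \ref{G}, identify $\mu^*=\delta_{\omega_f}^{\ZZ}\times\mu$ so that the deterministic exponent hypotheses translate into the random ones for $\mu^*$-a.e. point, and apply Theorem \ref{uni2}. The paper leaves the identification of the random splitting along the constant sequence with the deterministic splitting implicit, which you spell out; otherwise the arguments coincide.
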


\begin{proof}[Proof of Corollary \ref{uni}]
Assume that $\mu_n\to \mu$ as $n\to +\infty$, then $\mu$ is an $f$-invariant measure on $M$. By Lemma \ref{G}, we obtain that $\mu^*=\delta_{\omega_f}^{\ZZ}\times \mu$. The assumption 
on Lyapunov exponents along $E^{cs}$ and $E^{cu}$ is equivalent to say that 
$$
\lim_{n\to +\infty}\frac{1}{n}\log \|Df_{\underline{\omega}}^n|_{E^{cs}(\underline{\omega},x)}\|<-\beta,\quad
\lim_{n\to +\infty}\frac{1}{n}\log \|Df_{\underline{\omega}}^{-n}|_{E^{cu}(\underline{\omega},x)}\|<-\beta
$$
hold for $\mu^*$-almost every $(\underline{\omega},x)\in \Omega_{\varepsilon_0}\times M$. Therefore, we get the desired result by applying Theorem \ref{uni}.
\end{proof}

Now we are ready to complete the proof of Theorem \ref{TheoA}.
\begin{proof}[Proof of Theorem \ref{TheoA}]
Let $f\in \U(M)$ with partially hyperbolic splitting $TM=E^u\oplus E_1^c\oplus E_2^c$ such that $E_1^c$ is mostly expanding and $E_2^c$ is mostly contracting. Given $\alpha>0$ and $\ell\in \NN$, recall the definition of hyperbolic blocks $\{\B_{\ell}(\alpha)\}_{\ell\ge 1}$, where we assume $E^{cu}:=E^u\oplus E_1^c$ and $E^{cs}:=E_2^c$.

By the result of \cite[Lemma 3.4 \& Lemma 3.5]{MCY17}, one knows that for every Gibbs $u$-state $\mu$ of $f$, there is $\beta>0$ such that 
$$
\lim_{n\to +\infty}\frac{1}{n}\log \|Df^n|_{E^{cs}(x)}\|<-\beta,\quad
\lim_{n\to +\infty}\frac{1}{n}\log \|Df^{-n}|_{E^{cu}(x)}\|<-\beta
$$
for $\mu$-almost every $x\in M$. Now we assume that $\mu$ is a zero noise limit of some sequence of stationary measures $\mu_n$, which implies that $\mu$ is a Gibbs $u$-state(see e.g. \cite[Proposition 5]{CY05}). Let $\mu^*$ and $\mu_n^*$ be the  $\F$-invariant measures associated to $\mu_n$ and $\mu$ given by Lemma \ref{G}.
By Corollary \ref{uni}, for any $\alpha>\beta$ we have 
$$
\lim_{\ell\to +\infty}\liminf_{n\to +\infty}\mu_n^*(\B_{\ell}(\alpha))=1.
$$
Hence, Theorem \ref{gibbse} implies that $\mu^*$ is a Gibbs $cu$-state for the extended dynamical system. Using  Lemma \ref{gibbsp}, we know $\mu$ is a Gibbs $cu$-state. By Lemma \ref{gibbsp1} and Lemma \ref{um}, $\mu$ is a convex combination of finitely many ergodic physical measures of $f$. This completes the proof of Theorem \ref{TheoA}.
\end{proof}

\end{document}